\numberwithin{equation}{section}
\newtheorem{theo}{Theorem}
\newtheorem{prop}{Proposition}[section]
\newtheorem{lemm}[prop]{Lemma}
\theoremstyle{definition}
\newtheorem{defi}[prop]{Definition}
\newtheorem{rema}[prop]{Remark}
\newcommand{\sol}{\mathfrak{B}}
\newcommand{\RR}{\mathbb{R}}
\newcommand{\CC}{\mathbb{C}}
\newcommand{\GD}{\mathrm{GD}}
\newcommand{\TM}{\mathrm{TM}}
\newcommand{\AG}{\mathrm{AG}}
\newcommand{\feas}{\mathrm{f}}
\newcommand{\C}{\mathcal{C}}
\newcommand{\sig}{\ell}
\newcommand{\IQC}{\mathrm{IQC}}
\newcommand{\conj}{\bar}
\newcommand{\refe}{\mathrm{ref}}
\newcommand{\disc}{\mathrm{disc}}
\renewcommand{\Re}{\operatorname{Re}}
\title[A frequency-domain analysis of inexact gradient methods]
{A frequency-domain analysis of inexact gradient methods}
\author{Oran Gannot}
\email{ogannot@berkeley.edu}
\begin{document} 
	
	\begin{abstract} 
We study robustness properties of some iterative gradient-based methods for strongly convex functions, as well as for the larger class of functions with sector-bounded gradients, under a relative error model. Proofs of the corresponding convergence rates are based on frequency-domain criteria for the stability of nonlinear systems. Applications are given to inexact versions of gradient descent and the Triple Momentum Method.  To further emphasize the usefulness of frequency-domain methods, we derive improved analytic bounds for the convergence rate of Nesterov's accelerated method (in the exact setting) on strongly convex functions.
	\end{abstract}
	
	\maketitle
	
	\thispagestyle{empty}
	
	\section{Introduction}

\subsection{Overview} \label{subsect:overview}
As observed in  \cite{lessard2016analysis}, control-theoretic techniques originating with the absolute stability problem provide a useful framework for the design and analysis of iterative first-order optimization methods. Certain fundamental algorithms, including gradient descent, can be viewed as a linear system in feedback with the gradient $\nabla f$ of the function to be optimized. More precisely, their iterates are of the form
\begin{equation} \label{eq:lure}
x_{t+1} = Ax_t + B\nabla f(Cx_t) 
\end{equation}
for matrices $A,B,C$. The purpose of this paper is to show how frequency-domain methods can be used to characterize robustness properties of such algorithms under a relative error model. 

 
To formulate the error model, suppose that instead of the exact gradient in \eqref{eq:lure} one measures $\nabla f(Cx_t) + e_t$, where the magnitude of the error $e_t$ is bounded by a multiple $\delta \in [0,1)$ of the exact gradient:
\begin{equation} \label{eq:errormodel}
|e_t| \leq \delta |\nabla f(Cx_t)|, \quad t \geq 0.
\end{equation}
If $f$ belongs to a class of functions for which the exact iterates converge exponentially (i.e., geometrically) to a unique equilibrium, then it is still possible for the inexact iterates to converge exponentially as well. Naturally, any convergence rate for the inexact problem is expected to degrade with the size of $\delta$. Since the error sequence $e$ can be chosen adversarially, upper bounds on the inexact rates provide a strong measure of the robustness of a given algorithm to gradient perturbations.  

 Following a long tradition in control theory, exponential stability at a prescribed rate for systems of the form \eqref{eq:lure} (often called Lur'e systems \cite{lurie1957some}) can be certified in the time domain via the feasibility of a certain linear matrix inequality (LMI) with two sets of decision variables:
\begin{enumerate} \itemsep6pt 
	\item a \emph{storage function} (a Lyapunov-type function), which is a quadratic form in the state variables,
	\item  multipliers corresponding to constraints satisfied by the nonlinearity (so-called \emph{integral quadratic constraints}, or IQCs).
\end{enumerate}
 For a fixed set of IQCs, we refer to this feasibility problem as a \emph{stability test} or \emph{stability criterion}. The frequency methods in this paper provide a systematic way of reducing the number of free parameters for a given stability test: the feasibility problem can alternatively be formulated in the frequency domain, wherein the storage function does not appear explicitly. This eliminates a number of decision variables that grows quadratically with the dimension of the state space. Furthermore, at the margins of stability (e.g., when searching for the best possible exponential convergence rate), the values of the multipliers are often determined uniquely by the frequency test.

These frequency methods are applied to inexact gradient descent, which exhibits richer phenomenology than its exact counterpart. We study convergence rates for inexact gradient descent over the set of strongly convex functions, as well as the larger class of functions with sector-bounded gradients. For a discussion of these function classes see \S \ref{subsect:main} below, but we stress that functions with sector-bounded gradients need not be convex. In the exact setting it is known that the worst-case exponential convergence rates over these two classes coincide. 

For inexact gradient descent our results are partially contained in  \cite{de2017worst1}, although the methodology differs. Using essentially the same stability tests but in the time-domain, the authors of \cite{de2017worst1} numerically solve the feasibility problem for a range of problem instantiations, and then guess analytic formulas for the decision variables. This work proposes a principled and potentially simpler approach for deriving these analytical results that does not depend on access to numerical solvers, nor on explicit constructions of Lyapunov functions.

At the opposite end of the complexity scale, we consider an inexact version of the Triple Momentum Method \cite{van2017fastest}. Although this is the fastest known first-order method for optimizing strongly convex functions when an exact oracle is available, numerical evidence suggests that it is not robust to perturbations \cite{cyrus2018robust}. We establish an explicit convergence rate for the inexact problem. Based on numerical evidence we conjecture that this rate is optimal with respect to the Jury--Lee criterion (the least conservative stability test for strongly convex functions considered in this paper).

As an introduction to frequency-domain analysis, we also revisit Nesterov's accelerated gradient method in the exact setting for strongly convex functions. The standard tuning and basic convergence rate for this method are given in \cite[\S 2.2]{nesterov2018lectures}. However, it was observed numerically in \cite{lessard2016analysis} that this basic rate is conservative. In \S \ref{subsect:nesterov}, we show how the Jury--Lee criterion can be used to easily improve upon the known rate by a constant factor.

\subsection{Results for inexact gradient descent} \label{subsect:main}
First we introduce the function classes used throughout the paper, which depend on two parameters $0 \leq m < L$.

\begin{defi} \label{defi:Sml}
	The gradient of a $\C^1$ function is said to be bounded in the sector $[m,L]$ if there exists $y_\star \in \RR^n$ such that 
		\begin{equation} \label{eq:sector}
	\langle m(y-y_\star) - \nabla f(y), L(y-y_\star) -\nabla f(y) \rangle \leq 0
	\end{equation}
for all $y \in \RR^n$.
	Denote by $\mathcal{S}(m,L)$ the set of all such functions. Let $\mathcal{S}_0(m,L)$ denote the subset of functions such that $y_\star = 0$ satisfies \eqref{eq:sector}.
\end{defi}
If $f \in \mathcal{S}(m,L)$, then $\nabla f(y_\star) = 0$ by continuity. Furthermore, if $m > 0$, then the reference point $y_\star$ is the unique global minimizer of $f$. Observe that functions in $\mathcal{S}(m,L)$ can be far from convex. This is easily seen in one dimension: if $f \in \mathcal{S}(m,L)$ and $y_\star = 0$, then the sector condition is equivalent to
\[
\min(my, Ly) \leq f'(y) \leq \max(my, Ly), 
\]
which places no slope restrictions on $f'$ at all. Sector-boundedness is also related to different notions of one-point convexity that arise in the literature; for some applications in neural networks, matrix completion, and phase retrieval, see \cite{kleinberg2018alternative,li2017convergence,arora2015simple,sun2016guaranteed,chen2015solving,candes2015phase,xiong2020analytical}. In the context of robust control, the question of stability for systems in feedback with sector-bounded nonlinearities is as old the subject itself \cite{lurie1957some}.

\begin{defi}
	Denote by $\mathcal{F}(m,L)$  the set  of $m$-strongly convex functions with $L$-Lipschitz gradients. Let $\mathcal{F}_0(m,L)$ denote the subset of functions such that $\nabla f(0) = 0$.
\end{defi}

Precisely, a $\C^1$ function $f : \RR^n \rightarrow \RR$ belongs to $\mathcal{F}(m,L)$ if  $x \mapsto f(x) - (m/2)\| x\|^2$ is convex and
\[
\| \nabla f(y_1) - \nabla f(y_2) \| \leq L\|y_1 -y_2\|
\]
for all $y_1, y_2 \in \RR^n$.
  It is well-known that membership in $\mathcal{F}(m,L)$ is equivalent to the strong monotonicity (or cocoercivity)  of its gradient \cite[Theorem 2.1.5]{nesterov2018lectures}. In other words, $f \in \mathcal F(m,L)$ if and only if
 	\begin{equation} \label{eq:monotone}
 \langle \nabla f (y_1)-\nabla f (y_2) - m(y_1 - y_2), \nabla f (y_1) - \nabla f(y_2) - L(y_1 - y_2) \rangle \leq 0 
 \end{equation}
for all $y_1, y_2 \in \RR^n$.
 In particular, $\mathcal{F}(m,L) \subset \mathcal{S}(m,L)$ and $\mathcal{F}_0(m,L) \subset \mathcal{S}_0(m,L)$.

Next, we recall  standard convergence properties of gradient descent on $\RR^n$ over the class $\mathcal{S}(m,L)$, where  $0 < m <L$. The iterates are given by 
\begin{equation} \label{eq:gradient}
x_{t+1} = x_t - \alpha \nabla f(x_t),
\end{equation}
where $\alpha > 0$. This can be written in the form \eqref{eq:lure} with $A = C = I_n$ and $B = -\alpha I_n$. Set 
\[
\rho_\GD = \rho_\GD(m,L,\alpha) \coloneqq \max(1-\alpha m, \alpha L -1).
\]
If $\alpha < 2/L$, then $\rho_\GD \in (0,1)$.
For a given $f \in \mathcal{S}(m,L)$, the gradient descent iterates satisfy the exponential stability estimate
\begin{equation} \label{eq:gradientlinearconvergence}
\| x_t -x_\star\| \leq \rho^t \|x_0 - x_\star\|, \quad t \geq 0
\end{equation}
with $\rho = \rho_\GD$. For a frequency interpretation of this bound see \S \ref{subsect:GD}. The optimal step size $\alpha = 2/(L+m)$ corresponds to the solution of $1 - \alpha m = \alpha L -1$, yielding the well-known rate
\[
\rho_\GD(m, L, 2/(L+m)) = \frac{\kappa -1}{\kappa + 1}.
\] 
Here $\kappa = L/m$ is the condition number. When $\alpha = 1/L$, one recovers the rate $\rho_\GD(m, L, 1/L) = 1-1/\kappa$.

If $f \in \mathcal{F}(m,L) \cap \C^2$, then the fact that \eqref{eq:gradientlinearconvergence}  holds with $\rho = \rho_\GD$ is classical \cite{polyak1987introduction}. As far as we are aware, the fact that this is also true for  $f \in \mathcal{S}(m,L)$ was first emphasized in \cite{lessard2016analysis}.  The popular reference \cite[Theorem 2.1.15]{nesterov2018lectures} establishes \eqref{eq:gradientlinearconvergence} with the more conservative rate
\begin{equation} \label{eq:nesterovrate}
 \bar \rho_\GD \coloneqq \left( 1-\frac{2\alpha mL}{L+m}\right)^{1/2}. 
\end{equation}
This quantity is never less than $\rho_\GD$, and equality holds if and only if $\alpha = 2/(L+m)$.

The rate $\rho_\GD$ is tight over $\mathcal{S}(m,L)$. Specifically, $\mathcal{S}(m,L)$ includes all the quadratic functions 
\[
f(x) = \tfrac{1}{2}\langle Qx,x \rangle + \langle p,x\rangle + b, \quad mI_n \preceq Q \preceq LI_n
\]
for which the system \eqref{eq:gradient} is linear.
Consequently, a spectral analysis shows that any $\rho > 0$ for which \eqref{eq:gradientlinearconvergence} holds is bounded from below by $\rho_\GD$. Since this lower bound is attained, the worst-case convergence rate of gradient descent over $\mathcal{S}(m,L)$ is determined purely by its performance on quadratic functions.\footnote{For the reader familiar with the control-theoretic absolute stability problem, the convergence result quoted above shows that gradient descent satisfies a version of the \emph{Aizerman conjecture}.}

Now consider the inexact version of \eqref{eq:gradient}, where the gradient is perturbed by an arbitrary sequence $e$ satisfying \eqref{eq:errormodel}:
\begin{equation} \label{eq:inexactgradient}
x_{t+1} = x_t - \alpha(\nabla f(x_t) + e_t).
\end{equation}
 If an estimate of the form \eqref{eq:gradientlinearconvergence} holds uniformly over $\mathcal{S}(m,L)$ for some $\rho>0$, then $\rho \geq \rho_{\GD}(\delta)$, where now
\[
\rho_\GD(\delta) = \rho_\GD(\delta; m,L,\alpha) \coloneqq \max(1-(1-\delta)\alpha m, (1+\delta)\alpha L -1).
\]
If $\alpha < 2/((1+\delta)L)$, then $\rho_\GD(\delta) \in (0,1)$.

 In contrast with the exact setting, we are not able to verify that the rate $\rho_\GD(\delta)$ is attained over $\mathcal{S}(m,L)$ for all choices of parameters and noise levels $\delta \in [0,1)$. Instead, we have the following:

\begin{prop} \label{prop:noisyS}
	Let $0< m < L$ and $\alpha > 0$. Suppose that $0 \leq \delta <2/(\kappa+1)$, and define
	\[
	 \alpha_- \coloneqq \frac{1}{1-\delta}\left( \frac{2}{L+m} - \frac{\delta}{m} \right), \quad \alpha_+ \coloneqq  \frac{1}{1+\delta}\left( \frac{2}{L+m} + \frac{\delta}{L} \right).
	\]
    If $\alpha \leq \alpha_-$ or $\alpha \geq \alpha_+$,
	then for any $f \in \mathcal{S}(m,L)$ the iterates \eqref{eq:inexactgradient} satisfy
	\[
	\| x_t-x_\star\| \leq \rho_\GD(\delta)^t \|x_0 - x_\star\|, \quad t \geq 0.
	\]
\end{prop}

It is important to note that the range of parameters for which Proposition \ref{prop:noisyS} holds is optimal with respect to the stability criterion used in its proof, namely a perturbed version of the circle criterion --- see Lemma \ref{lemm:perturbedFDIsector}. More precisely, the frequency test used to establish Proposition \ref{prop:noisyS} depends on multipliers encoding the constraints \eqref{eq:errormodel} and \eqref{eq:sector}. The corresponding test is feasible for $\rho = \rho_\GD(\delta)$ if and only if $(m,L,\alpha,\delta)$ satisfy the hypotheses of Proposition \ref{prop:noisyS}. In principle the test itself could be conservative (in terms of certifying asymptotic stability), but for a non-asymptotic tightness result see Lemma \ref{lemm:optimal} below.

When $\alpha \leq \alpha_-$, Proposition \ref{prop:noisyS} was previously established in \cite{de2017worst1} using a numerically-assisted method, as mentioned in \S \ref{subsect:overview}. Their result was stated for strongly convex functions, but it is clear that the proof also applies to $\mathcal{S}(m,L)$.

As $\kappa$ grows, the amount of noise that can be tolerated decreases towards zero. The optimal step size in the context of Proposition \ref{prop:noisyS} occurs when either $\alpha = \alpha_-$ or $\alpha = \alpha_+$. The corresponding convergence rate is
\begin{equation*} \label{eq:weakrate}
\rho = \frac{\kappa-1}{\kappa + 1} + \delta.
\end{equation*}
Meanwhile, the value of $\alpha$ that minimizes $\rho_\GD(\delta)$ over all $\alpha > 0$ and the corresponding value of $\rho_\GD(\delta)$ are given by
\begin{equation} \label{eq:sharprate}
	\alpha_\star \coloneqq \frac{2}{(1+\delta)L + (1-\delta)m}, \quad \rho_\GD(\delta; m,L,\alpha_\star) = \frac{ (1+\delta)\kappa -(1-\delta)}{(1+\delta)\kappa + (1-\delta)}.
\end{equation}
Observe that $\alpha_\star \in (\alpha_-, \alpha_+)$ for any $L > m$ and $\delta \in (0,1)$. Thus Proposition \ref{prop:noisyS} fails to certify the rate in \eqref{eq:sharprate}, even for arbitrarily small $\delta > 0$.
Instead, we can use the following result, which is new:

\begin{prop} \label{prop:weakGD}
		Let $0< m < L$ and $0 \leq \delta < 2\sqrt{\kappa}/(\kappa+1)$.  Define
		\begin{equation} \label{eq:generalrho}
			\bar  \rho_\GD(\delta) \coloneqq \left( 1-\frac{2 \alpha  L m}{L+m} + \frac{\alpha  \delta ^2 (L + m -2 \alpha Lm )}{2- \alpha  (L+m)}\right)^{1/2},
		\end{equation}
	where in order to ensure that $\bar \rho_\GD(\delta) \in (0,1)$ we assume that
		\[
		0 < \alpha <  \bar \alpha_- \coloneqq\frac{4 \kappa - \delta^2 (\kappa + 1)^2}{2m \kappa (1+\kappa)(1-\delta^2)}.
		\]
		Then for any $f \in \mathcal{S}(m,L)$ the iterates \eqref{eq:inexactgradient} satisfy
		\[
		\| x_t -x_\star \| \leq \bar\rho_\GD(\delta)^t \| x_0 - x_\star \|, \quad t \geq 0. 
		\]
\end{prop}

 It is always the case that $\bar \alpha_- \leq 2/((1+\delta) L)$, but $\alpha_+ > 2/((1+\delta) L)$ if $\delta > 2/(\kappa+1)$. Observe that the restriction on $\delta$ is weaker than the one in Proposition \ref{prop:noisyS}. The rate $\bar{\rho}_\GD(\delta)$ has the following precise characterization: if $(m,L,\alpha,\delta)$ are such that the inexact circle criterion is infeasible with $\rho_\GD(\delta)$, then $\bar{\rho}_\GD(\delta)$ is the smallest rate for which feasibility \emph{does} hold. Conversely, 
\[
\bar{\rho}_\GD(\delta) \geq \rho_\GD(\delta),
\] 
with equality if and only if $\alpha = \alpha_-$ or $\alpha = \alpha_+$. Note that when $\delta$ vanishes, $\bar {\rho}_\GD(0)$ is simply \eqref{eq:nesterovrate}.

Let us now assume that $\delta < 2\sqrt{\kappa}/(\kappa+1)$. The best possible rate afforded by either Proposition \ref{prop:noisyS} or Proposition \ref{prop:weakGD} is obtained by minimizing $\bar \rho_\GD(\delta)$ over $\alpha < \bar \alpha_-$. The minimizer $\bar \alpha_\star$ is
\begin{equation} \label{eq:baralphastar}
\bar \alpha_\star \coloneqq \frac{2}{L+m} - \frac{\delta (\kappa-1)}{m(1-\delta^2)^{1/2}\kappa^{1/2} (\kappa+1)},
\end{equation}
and the corresponding minimal rate is given by
\[
\bar\rho_\GD(\delta; m, L, \bar \alpha_\star) = \frac{1}{\kappa+1} \left((\kappa-1)((1-\delta^2)(\kappa-1)+4\kappa^{1/2}(1-\delta^2)^{1/2}\delta) + 4\kappa\delta^2\right)^{1/2}.
\]
Proposition \ref{prop:noisyS} and Proposition \ref{prop:weakGD} are best understood in tandem graphically --- see Figure \ref{fig:gd_sector}.

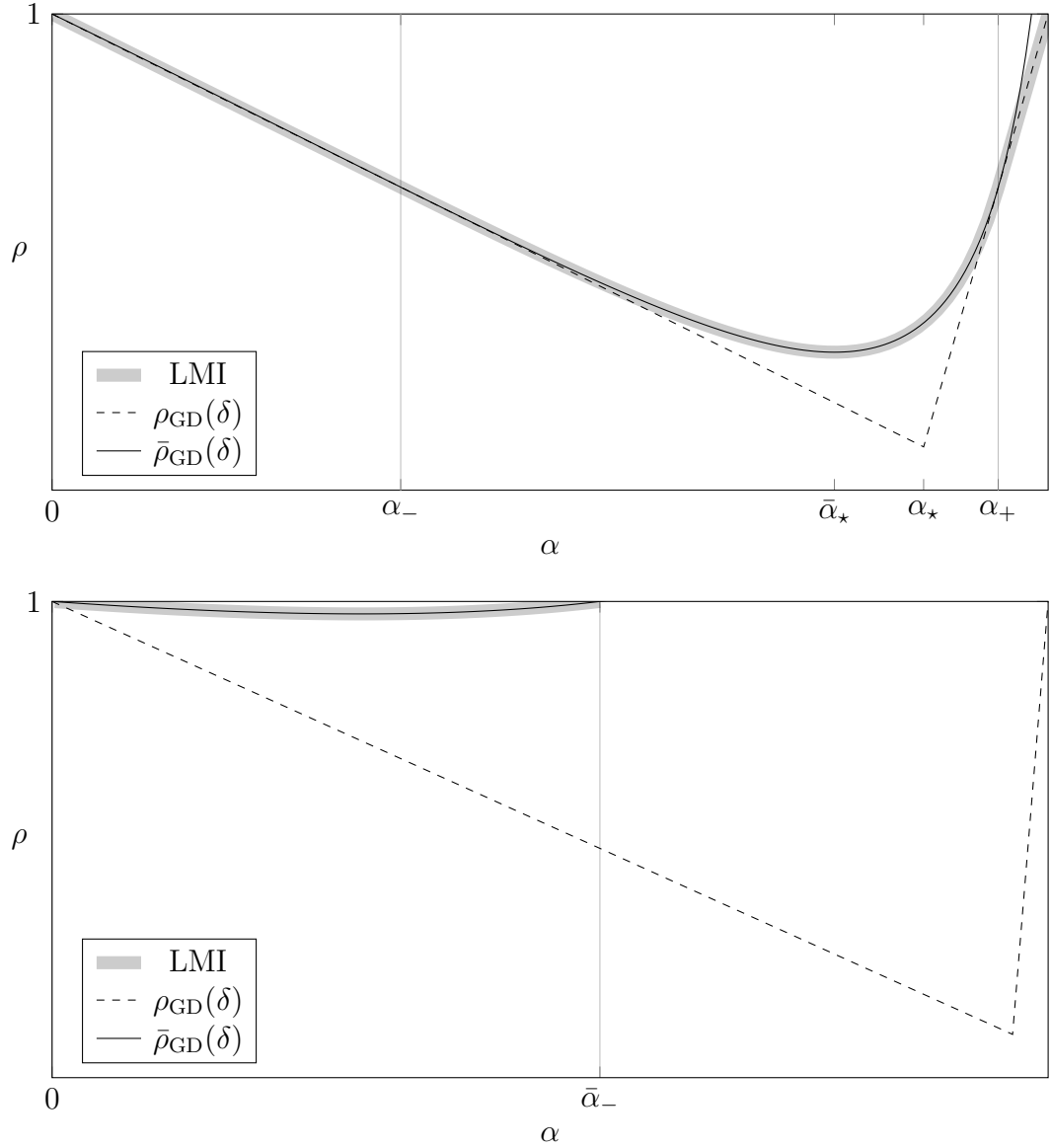
\begin{figure}
	\begin{center}
		\begin{tikzpicture}[
		    declare function ={ barrhogd(\L,\m,\d,\a) = sqrt(1-(2*\a*\L*\m)/(\L+\m)+\a*\d^2*(\L+\m-2*\a*\L*\m)/(2-\a*(\L+\m)));
		    },
	  	  declare function ={ alphaminus(\L,\m,\d) = (1/(1-\d))*(2/(\L+\m)-\d/\m);
	  	  },
    		declare function ={ alphaplus(\L,\m,\d) = (1/(1+\d))*(2/(\L+\m)+\d/\L);
  			  },
 			declare function ={ rhomin(\L,\m,\d) = ((1+\d)*\L-(1-\d)*\m)/((1+\d)*\L+(1-\d)*\m);
		},
			]
			\pgfmathsetmacro\alphaminusticksmall{alphaminus(3,1,.4)}
			\pgfmathsetmacro\alphaplusticksmall{alphaplus(3,1,.4)}
			\pgfmathsetmacro\rhomincoordsmall{rhomin(3,1,.4)}
				\pgfmathsetmacro\alphaminustickbig{alphaminus(3,1,.4)}
			\pgfmathsetmacro\alphaplustickbig{alphaplus(3,1,.4)}
			\pgfmathsetmacro\rhomincoordbig{rhomin(3,1,.4)}
			\begin{axis}[
				name=a,height=8cm, width=15cm, xlabel=$\alpha$, ylabel=${\rho}$, legend pos = south west,
				ylabel near ticks, 
				axis line style={-},
				xmin = 0,
				xmax = 0.47619,
				ymax = 1,
				xlabel style={yshift=0cm},
				ylabel style={rotate=-90, xshift=.3cm},
				xtick={0, 0.416667, 0.374012},
				xticklabels={$0$,  $\alpha_\star$, $\bar\alpha_\star$},
				extra x ticks = {\alphaminusticksmall, \alphaplusticksmall},
				extra x tick labels = {$\alpha_-$, $\alpha_+$},
				ytick={1},
				extra tick style={
					grid=major, }
				]
				\addplot[color=black, opacity=.2, line width=5pt] table[col sep=comma,header=false,x index=0,y index=1] {gd_inexact_smalldelta_sector.csv};
				
				\addplot+[color=black, mark = none, sharp plot, dashed] coordinates{(0,1) (0.416667,\rhomincoordsmall) (0.47619,1)};					
				
				\addplot[color=black, domain = 0 : .48, samples=200] {barrhogd(3, 1, .4, x)};
				\legend{LMI, $\rho_\GD(\delta)$, $\bar\rho_\GD(\delta)$}
			\end{axis}

				\begin{axis}[
			name=b, height=8cm, width=15cm, xlabel=$\alpha$, ylabel=${\rho}$, legend pos = south west,
			anchor=north west, at = (a.south west),
			yshift=-1.5cm,
			ylabel near ticks, 
			axis line style={-},
			xmin = 0,
			xmax = 0.37037,
			ymax = 1,
			xlabel style={yshift=0cm},
			ylabel style={rotate=-90, xshift=.3cm},
			xtick={0, 0.203704}, 
			xticklabels={$0$, $\bar\alpha_-$}, 
			ytick={1},
			xmajorgrids=true
			]
			\addplot[color=black, opacity=.2, line width=5pt] table[col sep=comma,header=false,x index=0,y index=1] {gd_inexact_largedelta_sector.csv};
			
			\addplot+[color=black, mark = none, sharp plot, dashed] coordinates{(0,1) (0.357143,\rhomincoordbig) (0.37037,1)};					
			
			\addplot[color=black, domain = 0 : .48, samples=200] {barrhogd(3, 1, .8, x)};
			\legend{LMI, $\rho_\GD(\delta)$, $\bar\rho_\GD(\delta)$}
		\end{axis}

		\end{tikzpicture}
	\end{center}
	\caption{ An illustration of Proposition \ref{prop:noisyS} and Proposition \ref{prop:weakGD}. These figures were produced with $m=1$ and $L = 3$. In the top figure $\delta = 2/5$, so that $\delta < 2/(\kappa +1)$, and in the bottom figure $\delta = 4/5$, so that $2/(\kappa +1) < \delta < 2\sqrt{\kappa}/(\kappa +1)$. The largest value on the $\alpha$ axis is $2/((1+\delta)L)$. The thick gray line is the optimal rate computed numerically using \eqref{eq:LMI} and the sector IQC (see \S  \ref{subsect:GDSml} for details). In the top figure, when $\alpha \leq \alpha_-$ or $\alpha \geq \alpha_+$ the numerically computed rate agrees with $\rho_\GD(\delta)$ in accordance with Proposition \ref{prop:noisyS}; if $\alpha \in (\alpha_-, \alpha_+)$, then it agrees with $\bar \rho_\GD(\delta)$. In the bottom figure only Proposition \ref{prop:weakGD} is applicable.	\label{fig:gd_sector}}
\end{figure}

\begin{figure}
	\begin{center}
		\begin{tikzpicture}[
			declare function ={ barrhogd(\L,\m,\d,\a) = sqrt(1-(2*\a*\L*\m)/(\L+\m)+\a*\d^2*(\L+\m-2*\a*\L*\m)/(2-\a*(\L+\m)));
			},
			declare function ={ alphaminus(\L,\m,\d) = (1/(1-\d))*(2/(\L+\m)-\d/\m);
			},
			declare function ={ alphaplus(\L,\m,\d) = (1/(1+\d))*(2/(\L+\m)+\d/\L);
			},
			declare function ={ rhomin(\L,\m,\d) = ((1+\d)*\L-(1-\d)*\m)/((1+\d)*\L+(1-\d)*\m);
			},
			]
			\pgfmathsetmacro\alphaminustick{alphaminus(3,1,.6)}
			\pgfmathsetmacro\alphaplustick{alphaplus(3,1,.6)}
			\pgfmathsetmacro\rhomincoord{rhomin(3,1,.6)}
			\begin{axis}[
				height=8cm, width=15cm, xlabel=$\alpha$, ylabel=${\rho}$, legend pos = south west,
				ylabel near ticks, 
				axis line style={-},
				xmin = 0,
				xmax = 0.416667,
				ymax = 1,
				xlabel style={yshift=0cm},
				ylabel style={rotate=-90, xshift=.3cm},
				xtick={0}, 
				xticklabels={$0$},
				ytick={1},
				xmajorgrids=true
				]
				
				\addplot[color=black, opacity=.2, line width=5pt] table[col sep=comma,header=false,x index=0,y index=1] {gd_inexact_convex.csv};
				
				\addplot+[color=black, mark = none, sharp plot, dashed] coordinates{(0,1) (0.384615,\rhomincoord) (0.416667,1)};					
			
				\legend{LMI, $\rho_\GD(\delta)$, $\bar\rho_\GD(\delta)$}
			\end{axis}
		\end{tikzpicture}
	\end{center}
	\caption{Here $m =1, \, L= 3$, and $\delta = 3/5$. The thick gray line is the optimal rate computed numerically using \eqref{eq:LMI} and the off-by-one IQC for strongly convex functions (see  \S \ref{subsect:inexactFmL} for details). These numerical results are consistent with Proposition \ref{prop:GDM}. \label{fig:gd_convex}}
\end{figure}
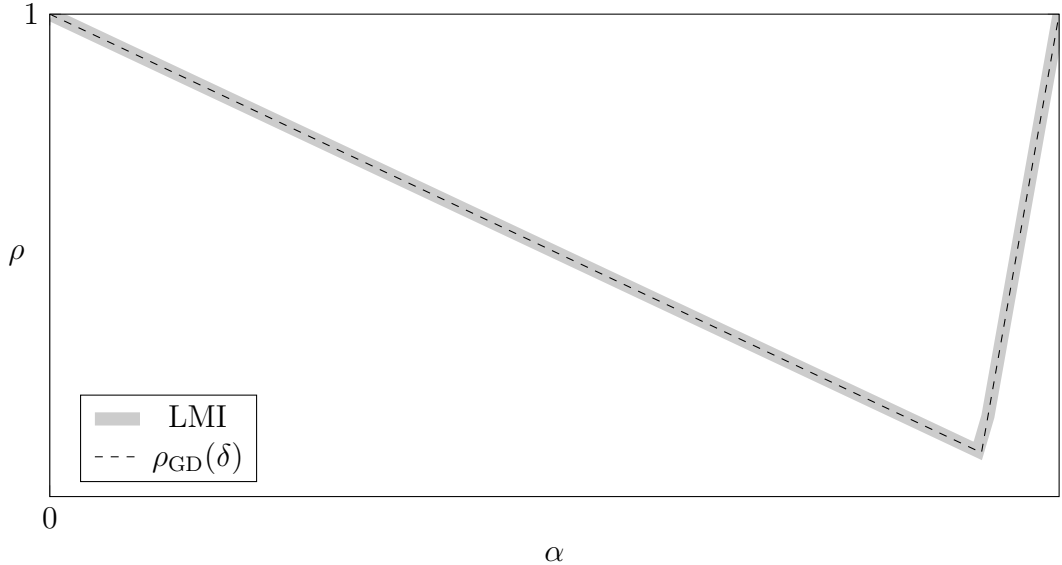

Finally, we show that by restricting to $\mathcal{F}(m,L)$, inexact gradient descent attains its optimal exponential convergence rate $\rho_\GD(\delta)$ for any $\delta \in [0,1)$.\footnote{In the language of the absolute stability problem, inexact gradient descent satisfies a version of the \emph{Kalman conjecture}.} 

\begin{prop} \label{prop:GDM}
	Let $0< m<L$ and $0 \leq \delta <1$. If $0 < \alpha < 2/((1+\delta) L) $ and $\rho \geq \rho_\GD(\delta)$, then there exists $c>0$ such that for any $f \in \mathcal{F}(m,L)$ the iterates \eqref{eq:gradient} satisfy
	\[
	 \| x_t -x_\star\| \leq c\rho^t \| x_0 - x_\star\|, \quad t \geq 0.
	\]
\end{prop}
In particular, the optimal rate \eqref{eq:sharprate} is feasible for inexact gradient descent on strongly convex functions --- see Figure \ref{fig:gd_convex}. When an exact line search is employed, it was shown in \cite{de2017worst} that the rate \eqref{eq:sharprate} holds for inexact gradient descent on strongly convex functions. 

\begin{rema}  As observed by an anonymous referee, Proposition \ref{prop:GDM} can already be deduced from \cite[Appendix B, Proof of Theorem 5.3]{de2017worst1}. There, the authors show that
\begin{equation} \label{eq:flyapunov}
f(x_{t+1}) - f(x_\star) \leq \rho_\GD(\delta)^2 (f(x_t) - f(x_\star)), \quad t \geq 0
\end{equation}
whenever $f \in \mathcal{F}(m,L)$ and $\alpha \leq \alpha_+$, but a close examination shows that the same proof actually applies for $\alpha \leq 2/((1+\delta)L)$.
\end{rema} 

Using the convex interpolation and duality techniques from \cite{taylor2017smooth}, one can also show that the above results for $\mathcal{S}(m, L)$ are optimal over one time step in the following non-asymptotic sense, at least in dimensions $n \geq 3$: 

\begin{lemm} \label{lemm:optimal}
 Let $0 < m < L$ and $0 \leq \delta < 2\sqrt{\kappa}/(\kappa+1)$. Suppose that $n \geq 3$ and $\alpha \in (0, \bar \alpha_-)$. If
\begin{equation} \label{eq:rhoupperbound}
\rho < \begin{cases} \rho_\GD(\delta) \text{ if } \alpha \notin (\alpha_-, \alpha_+), \\
	\bar \rho_\GD(\delta) \text{ if } \alpha \in (\alpha_-, \alpha_+),
	\end{cases}
\end{equation}
 then there exists $f \in \mathcal{S}(m,L)$ and $x_0, e_0 \in \RR^{n}$ such that
\[
\| x_0 - \alpha(\nabla f(x_0) + e_0) - x_\star \| > \rho \| x_0 - x_\star\|, \quad \| e_0 \| \leq \delta \| \nabla f(x_0) \|.
\]
\end{lemm}
In fact, we can choose a counterexample $f \in \mathcal{F}(m,L)$, which does not contradict the asymptotic result of Proposition \ref{prop:GDM} (because of the constant $c>0$ appearing in the stability estimate), nor \eqref{eq:flyapunov} (because convergence to optimality is not being measured in the Euclidean norm). Such non-asymptotic optimality results are the subject of the closely related \emph{performance estimation problem} discussed in \S \ref{subsect:previouswork} below.

\subsection{Results for the inexact Triple Momentum Method} \label{subsect:TMMresults}
A general class of momentum-based methods is given by the recursion
\begin{equation} \label{eq:degree2} 
\begin{aligned}
\xi_{t+1} &= \xi_t + \beta(\xi_t - \xi_{t-1}) - \alpha \nabla f(y_t), \\
y_t &= \xi_t + \gamma(\xi_t - \xi_{t-1}).
\end{aligned}
\end{equation} 
In particular, these iterates admit a state-space representation as in \eqref{eq:lure}, where the state is $(\xi_t, \xi_{t-1}) \in \RR^{2n}$. For simplicity, we assume the initialization $\xi_{-1} = 0$. Define
\[
\rho_\TM \coloneqq  1-1/\sqrt{\kappa}.
\]
The Triple Momentum Method (TMM), developed in \cite{van2017fastest}\footnote{In fact, the choice of parameters \eqref{eq:TMparameters} was originally motivated by a frequency-domain analysis; for more, see \S \ref{sect:TMM} and \cite{van2017fastest}.}, is given by the choices
\begin{equation} \label{eq:TMparameters}
\alpha = \frac{1+\rho_\TM}{L}, \quad \beta = \frac{\rho_\TM^2}{2-\rho_\TM}, \quad \gamma = \frac{\rho^2_\TM}{(1+\rho_\TM)(2-\rho_\TM)}.
\end{equation}
TMM is the fastest known gradient-based method for optimizing strongly convex functions: given $\rho \geq \rho_\TM$, there exists $c>0$ such that
\[
\| \xi_t - \xi_\star \| \leq c \rho^t  \|\xi_0 - \xi_\star\|, \quad t \geq 0
\]
whenever $f \in \mathcal{F}(m,L)$, where $\xi_\star$ is the unique minimizer of $f$.

Next, suppose that TMM is perturbed by replacing $\nabla f(y_t)$ with $\nabla f(y_t) + e_t$ in \eqref{eq:degree2}, where the error sequence $e$ satisfies \eqref{eq:errormodel}. We refer to the resulting recursion as inexact TMM.
\begin{prop} \label{prop:TMM}
 Let $0 < m < L$, and define $\theta_\TM \coloneqq (2-\rho_\TM)\rho_\TM + (2+\rho_\TM)\delta$. Furthermore, define
 \begin{equation} \label{eq:TMMperturbedrate}
 \bar \rho_\TM(\delta) \coloneqq \frac{\theta_\TM + (\theta_\TM^2 + 4\delta \rho_\TM^2(2-\rho_\TM))^{1/2}}{2(2-\rho_\TM)},
 \end{equation}
 where in order to ensure $\bar \rho_\TM(\delta) \in (0,1)$ we assume that
  \[
 0 \leq \delta < \frac{\sqrt{\kappa}+1}{4\kappa - 3\sqrt{\kappa} +1}.
 \]
If $\rho \geq \bar \rho_\TM(\delta)$, then there exists $c>0$ such that for any $f \in \mathcal{F}(m,L)$ the inexact TMM iterates satisfy
	\[
	\| \xi_t -\xi_\star\| \leq c\rho^t \| \xi_0 - \xi_\star\|, \quad t \geq 0.
	\]
\end{prop} 

Proposition \ref{prop:TMM} is based on a perturbed version of the Jury--Lee criterion --- see Lemma \ref{lemm:perturbedFDIconvex}. We conjecture that the rate \eqref{eq:TMMperturbedrate} is actually \emph{optimal} with respect to this criterion. While a proof is currently out of reach, we have been unable to find any counterexamples (see Figure \ref{fig:tmm_inexact} for some numerical illustrations).

\subsection{Previous work} \label{subsect:previouswork}

Following \cite{lessard2016analysis}, numerous papers have adopted a   control-theoretic viewpoint for the analysis of gradient-based optimization methods; a partial list of recent contributions in a variety of directions includes \cite{hu2016exponential,hu2017dissipativity,cyrus2018robust,van2017fastest,taylor2018lyapunov,hu2018dissipativity,lessard2019direct,fazlyab2018analysis,hassan2019proximal,michalowsky2019robust,badithela2019analysis,nishihara2015general,hu2019characterizing,francca2016explicit,hu2017analysis,drummond2018accelerated,michalowsky2020design,gramlich2020convex,xiong2020analytical,hu2020analysis,hu2017control,safavi2018explicit}. A few of these works apply frequency methods, notably \cite{van2017fastest,cyrus2018robust,xiong2020analytical}. However, the frequency approach is not always taken as far as possible. For instance, \cite{van2017fastest,cyrus2018robust} use a frequency interpretation for algorithm \emph{design} via pole-zero assignment, but then resort to time-domain methods in order to \emph{prove} their convergence results. This is because the frequency criteria that are employed degenerate at the margins of Lyapunov stability.  In this paper we emphasize a more robust set of tools, in particular Popov's hyperstability theorem, that enables a self-contained analysis in the frequency domain; for a thorough discussion, see \S \ref{subsect:feedback}.

Earlier, \cite{drori2014performance} introduced a closely related semidefinite programming formulation, the so-called performance estimation problem (PEP), for analyzing the worst-case performance of iterative optimization methods over a given number of time steps. Subsequent developments include \cite{de2017worst,de2017worst1,taylor2017exact,kim2016optimized,ryu2018operator,drori2018efficient,taylor2018exact,kim2017convergence}. Importantly, one input to PEP is a choice of user-specified \emph{performance measure}. An example is distance to optimality, but this is not always appropriate, e.g., for Nesterov's method or TMM (of course distance to optimality is a good measure of \emph{convergence}, but not necessarily for the behavior of an algorithm over a given set of time steps). 

In general, finding a suitable performance measure is nontrivial. In \cite{taylor2018lyapunov}, the authors search for a performance measure in the form of a Lur'e--Postnikov-type Lyapunov function; under a dimensionality condition, they formulate a semidefinite program whose feasibility is equivalent to the existence of such a Lyapunov function decreasing geometrically at each iteration over $\mathcal{F}(m,L)$. Of course it is not clear whether this approach provides necessary conditions for exponential stability, since the class of Lyapunov functions under consideration is parametric.

The specific IQCs used in this paper, as well as other works derivative of \cite{lessard2016analysis}, provide simpler sufficient  conditions for the existence of a Lyapunov function within a certain parametric family. However, we do not stress the existence of a Lyapunov function, and instead choose to work within Popov's framework of hyperstability (the relationship between the two viewpoints is especially clear in the frequency domain --- see the discussion following Lemma \ref{lemm:jurylee}).

As noted in \S \ref{subsect:main}, some analytic bounds for the convergence of inexact gradient descent under the error model \eqref{eq:errormodel} were given in \cite{de2017worst,de2017worst1}. Numerical computations of exponential convergence rates under the same error model were carried out in \cite{lessard2016analysis,cyrus2018robust} for the case of strongly convex functions, but without analytic results. 
Simpler results can be found in
\cite{polyak1987introduction,bertsekas1999nonlinear}. Stochastic gradient descent, where the gradients are subject to additional relative noise, was analyzed in \cite{hu2020analysis}. The relative error model also arises naturally in the analysis of various incremental gradient methods for minimizing finite sums \cite{friedlander2012hybrid} (inexactness arises from sampling only a subset of the terms in the sum).

Other error models are considered in \cite{d2008smooth,devolder2014first,devolder2013first,devolder2013intermediate,cohen2018acceleration}, with both deterministic and stochastic perturbations. For example, \cite{devolder2014first} considers an inexact oracle for $\mathcal{F}(0,L)$: instead of $(f(y), \nabla f(y))$, the oracle returns a pair $(f_\delta(y), g_\delta(y))$ satisfying
\[
0 \leq f(x) - (f_\delta(y) + \langle g_\delta(y), x-y\rangle ) \leq \tfrac{L}{2} \|x-y\|^2, \quad x \in \RR^n.
\]
The case of strongly convex functions was considered in \cite{devolder2013first}. 
In contrast, the error model in this paper only involves errors in the gradient calculations, and the aforementioned works do not overlap with ours.

\subsection{Outline of the paper}

In \S \ref{sect:stability}, we introduce Popov's notion of hyperstability and its frequency-domain formulation. The two frequency criteria used in this paper, the circle and Jury--Lee criteria, are described in \S \ref{subsect:sector} and \S \ref{subsect:ZF}. Applications of these two criteria in the exact setting to gradient descent and Nesterov's method are discussed in \S \ref{subsect:GD} and \S \ref{subsect:nesterov}, respectively.

The necessary modifications to the frequency criteria in the inexact setting are described in \S \ref{subsect:noise}. The results for inexact gradient descent given in \S \ref{subsect:main} are proved in \S \ref{sect:inexactGD}. Proposition \ref{prop:TMM} for inexact TMM is derived in \S \ref{sect:TMM}.

\subsection{Notation}
Given a matrix $M \in \CC^{p\times q}$, we use the notation $M^\top$ for its transpose and $M^*$ for its adjoint. If $M$ is square, we define $\Re M \coloneqq (M + M^*)/2$. Positive definiteness and semidefiniteness is denoted $M \succ 0$ and $M \succeq 0$. The standard sesquilinear (in the second factor) inner product on $\CC^p$ (and its restriction to $\RR^p$) is denoted by $\left < \cdot, \cdot \right>$. For an arbitrary complex-valued function $f$ we write $f \equiv 0$ if $f$ vanishes identically on its domain. Denote by $\sig_d$ the space of sequences on $\mathbb{N}_{\geq 0}$ with values in $\CC^d$.

\section{Stability criteria} \label{sect:stability}

\subsection{Hyperstability} \label{subsect:feedback}
 Consider a linear time-invariant (LTI) control system of the form
\begin{equation} \label{eq:dynamicalsystem}
x_{t+1} = Ax_t + Bu_t,
\end{equation}
where $A \in \RR^{d\times d}$ and $B\in \RR^{d \times n}$. Also let
\[
\sol(A,B) = \{(x,u) \in \ell_{d+n} : \, x_{t+1} = Ax_t + Bu_t\}.
\]
(The notation $\sol$ refers to a \emph{behavior} in the sense of Willems \cite{willems1997introduction}.)
Popov formulated the notion of hyperstability in the early 1960's to formalize the problem of certifying stability for \eqref{eq:dynamicalsystem} when the control $u \in \ell_n$ and state $x \in \ell_d$ are jointly constrained by quadratic inequalities. 

First we discuss the notion of an integral quadratic constraint (IQC). Fix a quadratic form
\begin{equation} \label{eq:QSR}
\sigma(x,u) =  \begin{bmatrix}
	x \\ u
\end{bmatrix}^* \begin{bmatrix}
Q & S^\top \\
S & R
\end{bmatrix} \begin{bmatrix}
x \\ u
\end{bmatrix}
\end{equation}
on $\CC^{d+n}$, where $Q =Q^\top \in \RR^{d\times d},\,S \in \RR^{n \times d}$, and $R = R^\top \in \RR^{n\times n}$.

\begin{defi}
	Let $\rho > 0$. A pair $(x,u) \in \sig_{d+n}$ is said to satisfy the $\rho$-IQC defined by $\sigma$ if
	\begin{equation} \label{eq:IQC}
	\sum_{t=0}^T \rho^{-2t} \sigma(x_t, u_t) \geq 0  \text{ for all }T\geq 0.
	\end{equation}
	Let $\IQC(\sigma; \rho)$ denote the set of all such pairs. Furthermore, given $\sigma_1, \ldots, \sigma_q$, define
	\[
	\IQC(\sigma_1, \ldots, \sigma_q; \rho) := \bigcap_{j = 1}^q \IQC(\sigma_j; \rho).
	\]
\end{defi}
Although dating back to early work of Yakubovich and Popov, the term IQC was popularized in the landmark paper \cite{megretski1997system}. While we use the IQC terminology, it is important to note that we do not apply the frequency-domain formulation in \cite{megretski1997system} to the analysis of gradient methods; instead, we depend on an earlier result due to Popov.

\begin{defi}
	Given $\sigma_1,\ldots, \sigma_q$ and $\rho > 0$, we say that $(A,B; \sigma_1,\ldots,\sigma_q)$ is $\rho$-hyperstable if there exists $c>0$ such that 
	\begin{equation} \label{eq:hyperstableestimate}
	\|x_t\| \leq c\rho^{t} \|x_0\|, \quad t \geq 0
	\end{equation}
	whenever $(x,u) \in \sol(A,B) \cap \IQC(\sigma_1,\ldots,\sigma_q; \rho)$.
\end{defi}
In other words, hyperstability guarantees that the system $x_{t+1} = Ax_t + Bu_t$ is uniformly exponentially stable over arbitrary input sequences $u$ as long as $(x,u) \in \IQC(\sigma_j;\rho)$ for each $j = 1,\ldots,q$. 

To show that $(A,B;\sigma)$ is $\rho$-hyperstable for a single constraint $\sigma(x,u) =  \langle Qx,x\rangle + 2\Re \langle Sx,u \rangle + \langle Ru,u\rangle$, consider the LMI
\begin{equation} \label{eq:LMI}
\begin{bmatrix} \tag{LMI}
A^\top PA - \rho^2 P & A^\top P B\\
B^\top P A & B^\top P B
\end{bmatrix} + \begin{bmatrix}
Q & S^\top \\ 
S & R
\end{bmatrix} \preceq 0
\end{equation}
with decision variable $P = P^\top \in \RR^{d \times d}$. In \cite{willems1972dissipativeI,willems1972dissipativeII}, \eqref{eq:LMI} is called the  \emph{dissipation inequality}, and the function
\[
x \mapsto \langle Px, x\rangle 
\]
is called a \emph{storage function}. If \eqref{eq:LMI} holds, then each $(x,u) \in \sol(A,B) \cap \IQC(\sigma;\rho)$ satisfies
\begin{equation} \label{eq:integrateddissipation}
\langle Px_t, x_t \rangle \leq \rho^{2t} \langle Px_0, x_0\rangle, \quad t \geq 0.
\end{equation}
Hyperstability is an immediate consequence of \eqref{eq:integrateddissipation} provided $P \succ 0$. 
For fixed $(A,B; \sigma)$ and $\rho > 0$, feasibility of \eqref{eq:LMI} for $P \succeq0$ is a convex program, and can be solved numerically (see \cite{lessard2016analysis} for more details).

The dissipation inequality can also provide sufficient conditions for $\rho$-hyperstability in the case of multiple constraints by observing that
\[
\IQC(\sigma_1,\ldots,\sigma_q;\rho) \subset \IQC(\lambda_1 \sigma_1 + \cdots + \lambda_q \sigma_q; \rho)
\]
for any collection of nonnegative multipliers $\lambda_1, \ldots \lambda_q$. Taking $\sigma = \sum \lambda_j \sigma_j$, one can also consider feasibility of \eqref{eq:LMI} with $\lambda_1, \ldots, \lambda_p$ as additional decision variables. Observe that this formulation is also a convex program.

 Popov gave a complete frequency-domain characterization of hyperstability under a certain \emph{minimal stability} hypothesis, which we now discuss. If $\rho = 1$, then hyperstability implies that a trajectory $x$ is bounded whenever the input is such that $(x,u) \in  \IQC(\sigma;1)$.  Minimal stability requires us to exhibit an input for which the corresponding trajectory $x$ actually converges to zero and $(x,u) \in \IQC(\sigma;1)$. 
The condition for general $\rho > 0$ is just a rescaling of this requirement:

\begin{defi} \label{defi:minimal}
	The triple $(A,B; \sigma)$ is said to be $\rho$-minimally stable if for every $x^0 \in \CC^d$ there exists $(x,u) \in \sol(A,B) \cap \IQC(\sigma; \rho)$ satisfying
	\[
	x_0 = x^0, \quad \rho^{-t} x_t \rightarrow0 \text{ as } t \rightarrow \infty.
	\]
\end{defi}

Among other consequences, minimal stability implies that every storage function is positive semidefinite. Indeed, given an arbitrary $x^0 \in \RR^d$, let $(x,u) \in \sol(A,B)$ be a trajectory as in the definition of minimal stability; let $t \rightarrow \infty$ in \eqref{eq:integrateddissipation} to deduce that 
\[
\langle Px^0,x^0\rangle \geq 0.
\]
 In applications the minimal stability hypothesis is typically  straightforward to verify --- see Lemma \ref{lemm:minimalstability} below.

 Next, given $(A,B)$ and $\sigma(x,u) =  \langle Qx,x\rangle + 2\Re \langle Sx,u \rangle + \langle Ru,u\rangle$, define the Popov function
\[
\Pi(\zeta,z) = \Pi(\zeta,z; A,B ; \sigma) := \begin{bmatrix}
(\conj \zeta I - A)^{-1}B \\ I_n
\end{bmatrix}^* \begin{bmatrix}
Q & S^\top \\
S & R
\end{bmatrix}\begin{bmatrix}
(z I-A)^{-1}B \\
I
\end{bmatrix},
\]
which is a meromorphic function of $(\zeta,z)$ with Hermitian values in $\CC^{n\times n}$. The dissipation inequality is linked to properties of $\Pi(\zeta,z)$ via the following observation: if $P = P^\top$ solves \eqref{eq:LMI}, then
	\begin{equation} \label{eq:FDI} \tag{FDI}
	\Pi(\bar z, z) \preceq 0 \text{ whenever $|z| =\rho$ and $\det(A-zI) \neq 0$.}
	\end{equation}
This is easily seen by multiplying \eqref{eq:LMI} on the right by the column vector $((zI-A)^{-1}Bu, u)$ and by its conjugate transpose on the left. The converse of this result is the content of the Kalman--Yakubovich--Popov (KYP) lemma (sometimes called the Kalman--Szeg\"{o} lemma in discrete time \cite{kalman1963stabilite}):
\begin{lemm} \label{lemm:KYP}
	If $(A,B)$ is controllable, then there exists $P = P^\top \in \RR^{d\times d}$ satisfying \eqref{eq:LMI} if and only if \eqref{eq:FDI} holds.
\end{lemm}

Recall that $(A,B)$ is controllable if for any pair of states there exists an input driving the system $x_{t+1} = Ax_t + Bu_t$ from the initial state to the terminal state in finite time. Equivalently, $(A,B)$ is controllable if
\begin{equation} \label{eq:controllable}
\mathrm{rank} \begin{bmatrix}
	B & AB & \cdots & A^{d-1}B
\end{bmatrix} = d.
\end{equation}
(See \cite{kalman2010lectures}.) Perhaps the simplest proof of Lemma \ref{lemm:KYP} is due to Willems via an optimal control argument \cite[Theorem 4]{willems1971least}.\footnote{The reader should ignore the statement about the finiteness of $V_\mathrm{f}$, which is famously false \cite{willems1974existence}.} For the classical proof using spectral factorizations, see \cite[\S 10, Theorem 1]{popov1973hyperstability}. An alternative argument is provided in \cite{rantzer1996kalman}.\footnote{\cite{rantzer1996kalman} explicitly states that $A$ should not have spectrum with $|z| = \rho$, but this is unnecessary. Such an assumption can always be removed when $(A,B)$ is controllable by shifting the eigenvalues off of $|z| = \rho$ via linear feedback: an equivalent characterization of controllability is that for any monic polynomial there is a matrix $N$ such that the characteristic polynomial of $A + BN$ is the given polynomial \cite[\S 34, Theorem 1]{popov1973hyperstability}. The corresponding linear change of variables $(x, u) \mapsto (x, u + Nx)$ does not affect the validity of \eqref{eq:FDI} (it corresponds to a congruency transformation of the Popov function by a rational matrix function), and leaves invariant any solution of \eqref{eq:LMI}.}

Even if $(A,B)$ is controllable and \eqref{eq:FDI} holds, Lemma \ref{lemm:KYP} does not necessarily imply feasibility of \eqref{eq:LMI} for $P \succ 0$. In particular, a naive application of \eqref{eq:integrateddissipation} does not guarantee hyperstability. Instead, Popov established the following result.

\begin{theo} [{\cite[\S 18, Theorem 1]{popov1973hyperstability}}] \label{theo:popov}
	Given $\rho > 0$, suppose that $(A,B; \sigma)$ satisfies the following conditions.
	\begin{enumerate} \itemsep6pt
		\item $B\neq 0$, \label{it:popov1}
		\item  $(A,B;\sigma)$ is $\rho$-minimally stable, \label {it:popov2}
		\item $(\zeta,z) \mapsto \det \Pi(\zeta,z) \not\equiv 0$. \label{it:popov3}
	\end{enumerate}
	Then $(A,B;\sigma)$ is $\rho$-hyperstable if and only if \eqref{eq:FDI} holds.
	
	 In addition, if $(A,B)$ is controllable and $\Pi(\bar z_0, z_0) \prec 0$ for some $|z_0| = \rho$, then $(A,B;\sigma)$ is $\rho$-hyperstable if and only if \eqref{eq:LMI} admits a solution $P \succ 0$.
\end{theo}

Apart from dispensing with issues like controllability, the main advantage of Theorem \ref{theo:popov} in the context of this paper is that it replaces the search for $P$ with the task of checking \eqref{eq:FDI}. Although in principle the latter consists of verifying an infinite set of inequalities, the fact that $\Pi(\bar z,z)$ involves only rational functions of $z$ reduces \eqref{eq:FDI} to a small number of scalar inequalities. This plays a crucial role in analytic proofs, since additional parameters tend to make such problems disproportionately harder. 

Theorem \ref{theo:popov} is also particularly well-suited for analysis at the margins of $\ell^\infty$ stability: \eqref{eq:FDI} need not hold strictly, and $A$ can have eigenvalues on the circle of radius $\rho$. This should be compared with other frequency-domain formulations that are often cited in the optimization literature based on \cite{megretski1997system} or strict versions of the KYP lemma (for example \cite{boczar2015exponential,van2017fastest}). Such results are useful in the context of $\ell^2$ stability, but fail to characterize $\ell^\infty$ behavior as \eqref{eq:FDI} degenerates.

Using only the assumption $B \neq 0$, it is easy to show that $\rho$-hyperstability implies \eqref{eq:FDI}: suppose that 
\[
\langle \Pi(\bar z_0,z_0)u^0, u^0\rangle > 0
\] 
for some  $\CC^n \ni u^0 \neq 0$, where $|z_0| > \rho$ not an eigenvalue of $A$. By perturbing $u^0$ and using $B \neq 0$, we can assume that $Bu^0 \neq 0$. If $u \in \ell_n$ and $x \in \ell_d$ are defined by 
\[
u_t = z_0^t u^0, \quad x_t = z_0^t (zI-A)^{-1} Bu^0,
\]
then $(x,u) \in \sol(A,B) \cap \IQC(\sigma;\rho)$. However, $|\rho^{-t} x_t| \rightarrow \infty$ as $t\rightarrow \infty$ since $Bu^0 \neq 0$, which contradicts $\rho$-hyperstability. Thus we must have $\Pi(\bar z_0,z_0) \preceq 0$, so \eqref{eq:LMI} holds by continuity up to $|z| = \rho$.

The converse proof that \eqref{eq:FDI} implies hyperstability under the hypotheses of Theorem \ref{theo:popov} is not entirely trivial; it relies on the existence of zero-dynamics for certain nondegenerate control systems which is outside the scope of this paper. Roughly speaking, the proof proceeds in three steps:
\begin{enumerate} \itemsep6pt
	\item Use that $B \neq 0$ to pass to a controllable subsystem; the uncontrollable states are estimated using minimal stability. 
	\item Apply the KYP lemma to obtain a solution $P = P^\top$ of \eqref{eq:LMI}, which is positive semidefinite by minimal stability. States in the positive spectral subspace of $P$ are estimated by \eqref{eq:integrateddissipation}.
	\item The nondegeneracy condition $ \det \Pi(\zeta,z) \not\equiv 0$ is used to construct a  normal form that allows for states in $\ker P$ to be estimated using minimal stability. 
\end{enumerate}
The original proof in \cite{popov1973hyperstability} is not entirely straightforward, but there do not seem to be many alternative references. A modern proof in the continuous-time setting is given in \cite{gannot2019frequency}; the adaptations needed in discrete time are straightforward.

The hypothesis that $\det \Pi(\zeta,z) \not\equiv 0$ in Theorem \ref{theo:popov} is not intuitive, but it is typically easy to check. For instance, it is always satisfied if 
\[
\det(S(z I - A)^{-1}B + R) \not\equiv 0,
\]
as can seen by taking $\zeta \rightarrow \infty$. This in turn is true if $\det R \neq 0$.

In the context of this paper, minimal stability is established via linear feedback, i.e., by choosing a control in the form $u = Nx$. Given $\rho > 0$, a square matrix is said to be $\rho$-Schur if all its eigenvalues lie in the open disk of radius $\rho$. The following result is used repeatedly.
\begin{lemm} \label{lemm:minimalstability}
	If there exists $N \in \CC^{n \times d}$ and $\varepsilon \in [0,2/\| R\|]$ such that $A+B(I+\varepsilon R)N + \varepsilon BS$ is $\rho$-Schur and 
	\begin{equation} \label{eq:minimalQSR}
	Q + 2 \Re N^* S + N^* RN \succeq 0,
	\end{equation}
	then $(A,B;\sigma)$ is $\rho$-minimally stable.
\end{lemm}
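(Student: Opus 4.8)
The plan is to construct, for a given $x_0 \in \RR^m$, an explicit trajectory $(x,u) \in \sol(A,B)$ witnessing minimal stability, using the linear feedback $u = Nx$. Setting $u(k) = Nx(k)$ in \eqref{eq:LTI} yields the closed-loop recursion $x(k+1) = (A+BN)x(k)$, but the hypothesis instead controls $A + B(I + \varepsilon R)N + \varepsilon BS$, so I first need to understand why that particular matrix is the relevant one. The idea is that for the IQC \eqref{eq:IQC} to hold along the trajectory, the pointwise quadratic form $\langle Qx,x\rangle + 2\langle Sx,u\rangle + \langle Ru,u\rangle$ should be nonnegative, and under $u = Nx$ this pointwise form is exactly $\langle (Q + 2\Re S^*N + N^*RN)x, x\rangle$, which is $\geq 0$ by \eqref{eq:minimalQSR}. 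So the pure linear feedback $u = Nx$ already satisfies the IQC term-by-term, hence $(x,u) \in \IQC(M,\rho)$ regardless of $\rho$.

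With that observation, the $\varepsilon = 0$ case is immediate: take $u = Nx$, so $x(k+1) = (A+BN)x(k)$, and the hypothesis (with $\varepsilon=0$) says $A+BN$ is $\rho$-Schur. Then $\|x(k)\| \leq C\tilde\rho^k \|x_0\|$ for some $\tilde\rho < \rho$ strictly, which forces $\rho^{-k}x(k)\to 0$; combined with the IQC membership just established, this exhibits the required trajectory and proves $\rho$-minimal stability. The role of $\varepsilon > 0$ is to relax the feedback: I would allow a perturbed feedback law of the form $u = Nx + \varepsilon(Sx + RNx)$, i.e. $u = (I+\varepsilon R)Nx + \varepsilon Sx$, so that the closed loop becomes $x(k+1) = \bigl(A + B(I+\varepsilon R)N + \varepsilon BS\bigr)x(k)$, matching the matrix in the hypothesis exactly. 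The point of introducing $\varepsilon$ is to enlarge the set of feedbacks under which the closed-loop matrix can be made $\rho$-Schur while still keeping the IQC satisfied.

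The main step to verify is therefore that this perturbed feedback still yields $(x,u) \in \IQC(M,\rho)$. Writing $u = Nx + \varepsilon(S + RN)x =: Nx + \varepsilon v$ with $v = (S+RN)x$, I would expand the quadratic form $g(x,u) = \langle Qx,x\rangle + 2\langle Sx,u\rangle + \langle Ru,u\rangle$ and compare it to its value $g_0 := \langle(Q+2\Re S^*N + N^*RN)x,x\rangle \geq 0$ at $\varepsilon = 0$. A direct computation should give $g(x,u) = g_0 + 2\varepsilon\langle v,v\rangle + \varepsilon^2\langle Rv,v\rangle = g_0 + \varepsilon\langle(2I + \varepsilon R)v,v\rangle$. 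The constraint $\varepsilon \in [0,2/\|R\|]$ is exactly what guarantees $2I + \varepsilon R \geq 0$ (since $\varepsilon R \geq -\varepsilon\|R\|I \geq -2I$), so the correction term is nonnegative and hence $g(x,u)\geq g_0 \geq 0$ pointwise. This again gives term-by-term nonnegativity, so \eqref{eq:IQC} holds for every $N$ and every $\rho$, and the partial sums are nonnegative.

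The anticipated obstacle is purely the bookkeeping in that quadratic-form expansion: one must track the symmetric/adjoint structure ($S^* $ versus $S$, $R = R^*$) carefully to confirm that the cross terms assemble into $\langle(2I+\varepsilon R)v,v\rangle$ with $v = (S+RN)x$, and that the threshold $2/\|R\|$ is precisely the sharp bound making $2I + \varepsilon R \succeq 0$. Once the pointwise inequality $g(x,u)\geq 0$ is in hand, the proof concludes as before: the $\rho$-Schur hypothesis on $A + B(I+\varepsilon R)N + \varepsilon BS$ gives exponential decay of $x(k)$ at a rate strictly below $\rho$, whence $\rho^{-k}x(k)\to 0$, and the IQC membership is automatic, completing the verification of Definition \ref{defi:minimal}.
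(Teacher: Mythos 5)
Your proposal is correct and follows the paper's proof exactly: the paper likewise takes the feedback $u = (N + \varepsilon(S+RN))x$, notes that \eqref{eq:minimalQSR} is preserved under replacing $N$ by $N + \varepsilon(S+RN)$ for $\varepsilon \in [0,2/\|R\|]$ (which is precisely your identity $g(x,u) = g_0 + \varepsilon\langle(2I+\varepsilon R)v,v\rangle$ with $v=(S+RN)x$), and uses the $\rho$-Schur hypothesis on the closed-loop matrix to get $\rho^{-k}x(k)\to 0$. You have merely written out explicitly the quadratic-form expansion that the paper leaves as an observation.
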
 
\begin{proof}
	The inequality \eqref{eq:minimalQSR} guarantees that $(x,Nx)\in \IQC(\sigma;\rho)$ for any sequence $x \in \ell_d$.  Now observe that \eqref{eq:minimalQSR} is stable under replacing $N$ with $N + \varepsilon (S + RN)$, provided $\varepsilon\in [0,2/\|R\|]$. Here the norm of $R$ is the operator norm. The $\rho$-Schur condition implies that solutions of the closed-loop system  \[
	x_{t+1} = (A+B(N + \varepsilon(S + RN))x_t
	\] 
	satisfy $\rho^{-t}x_t \rightarrow 0$ as $t\rightarrow \infty$.
\end{proof}

The following result is useful in bounding the values of certain multipliers. For simplicity we only provide the (trivial) proof in the controllable case, since the full strength of the result is only used for gradient descent, whose underlying LTI system is evidently controllable. It is true in general by passing to a controllable subsystem as in \cite[Eq. (26), \S 14]{popov1973hyperstability};
\begin{lemm} \label{lemm:Rconstraint}
		Let $B \neq 0$. If $(A,B;\sigma)$ is $\rho$-minimally stable and $\rho$-hyperstable, then $R \preceq 0$. 
\end{lemm}
\begin{proof}
Since \eqref{eq:FDI} holds, it follows that in the controllable case \eqref{eq:LMI} admits a solution $P \succeq 0$. We then conclude from the bottom right entry of \eqref{eq:LMI} that $R \preceq 0$.
\end{proof}

When $(A,B)$ is controllable and the nondegeneracy condition $\Pi(\bar z_0, z_0) \prec 0$ holds for some $|z_0| = \rho$, Popov showed that the so-called maximal solution of the KYP equations (see \cite{willems1971least} for the definitions) is positive definite.\footnote{In general, this is not true of \emph{every} solution to \eqref{eq:LMI}.} Unfortunately, this nondegeneracy condition often fails at the margins of stability. Instead, we record a simple algebraic criterion that ensures every positive semidefinite solution of \eqref{eq:LMI} is actually positive definite. Recall that the (in this case real-valued) pair $(A,S)$ is said to be observable if $(A^\top, S^\top)$ is controllable. Equivalently, $(A,S)$ is observable if every $A$-invariant subspace of $\RR^d$ contained in $\ker S$ is trivial.
\begin{lemm} \label{lemm:observable}
Let $Q \succeq 0$, and suppose that $P \succeq 0$ solves \eqref{eq:LMI}. If $(A,S)$ is observable, then $\ker P = \{0\}$.
\end{lemm}
\begin{proof}
	It suffices to show that $\ker P$ is $A$-invariant and contained in $\ker S$.
	The fact that $\ker P$ is $A$-invariant follows immediately from the Lyapunov inequality 
	\[
	A^\top P A - \rho^2 P \preceq 0
	\]implied by the upper left block of \eqref{eq:LMI}. To see that $\ker P \subset \ker S$, fix $ x \in \RR^d$ and pair \eqref{eq:LMI} with $(x,\varepsilon Sx)$. Then
	\[
	\langle P(A+\varepsilon BS)x, (A+\varepsilon BS)x \rangle -\rho^2 \langle Px,x \rangle \leq -(2\varepsilon - \varepsilon^2 \| R \| ) \| Sx\|^2.
	\]
	If $Px = 0$ and we choose $\varepsilon \in (0,2/\| R \|)$, then $Sx = 0$.
\end{proof}

\subsection{Circle criterion} \label{subsect:sector}
Consider an LTI system of the form \eqref{eq:dynamicalsystem}, and let $C \in \RR^{n\times d}$. Given $m < L$, consider the so-called sector IQC defined by the quadratic form
\begin{equation} \label{eq:sectorform}
\sigma_0(x,u) = \sigma_0(x,u; m,L,C) \coloneqq  2\langle u - mCx, LCx - u \rangle.
\end{equation}
If $f \in \mathcal{S}_0(m,L)$, then 
\[
(x, \nabla f(Cx))\in \IQC(\sigma_0; \rho)
\] 
for every $\rho > 0$ and every real-valued $x \in \ell_d$. The corresponding Popov function is
\[
\Pi (\conj z,z) = -2\Re \left[ (I - mH(z))^*(I-LH(z)) \right],
\]
where $H(z) = C(zI-A)^{-1}B$.
\begin{lemm} [Circle criterion] \label{lemm:circle}
Let $\rho > 0$. If $B \neq 0$ and there exists $k \in [m,L]$ such that $A+kBC$ is $\rho$-Schur, then $(A,B;\sigma_0)$ is $\rho$-hyperstable if and only if 
\begin{equation} \label{eq:circleFDI}
\Re \left[ (I - mH(z))^*(I-LH(z)) \right]\succeq 0 
\end{equation}
whenever $|z| = \rho$ and $\det(A-zI) \neq 0$.
\end{lemm}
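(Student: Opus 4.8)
The plan is to apply Popov's frequency-domain characterization (Theorem~\ref{theo:popov}) to the sector IQC matrix $M$ of \eqref{eq:Msector}, and then to translate the resulting inequality \eqref{eq:FDI} into \eqref{eq:circleFDI} through the identity $\Pi(\bar z,z) = -\Re\left((I - mH(z))^*(I-LH(z))\right)$ recorded above. Concretely, I would first establish $\rho$-minimal stability of $(A,B,M)$ directly, then invoke the $\rho$-version of Theorem~\ref{theo:popov} (obtained by replacing $(A,B)$ with $(\rho^{-1}A,\rho^{-1}B)$, so that \eqref{eq:FDI} is tested on the circle $|z|=\rho$, as noted at the end of \S\ref{subsect:sector}). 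The three hypotheses of Theorem~\ref{theo:popov} to verify are minimal stability, the non-vanishing of $z \mapsto \det\left(S(zI-A)^{-1}B + R\right)$, and $B \neq 0$.

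Two of these are immediate. Hypothesis~\eqref{it:popov3} is part of the assumption. For hypothesis~\eqref{it:popov2}, reading off $R = -2I$ from \eqref{eq:Msector} shows that $S(zI-A)^{-1}B + R \to R$ as $z \to \infty$, whose determinant tends to $(-2)^n \neq 0$, so the function cannot vanish identically.

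The substantive step is minimal stability, for which I would use Lemma~\ref{lemm:minimalstability} with the linear feedback $N = kC$ and $\varepsilon = 0$, where $k \in [m,L]$ is the value furnished by the hypothesis. With $\varepsilon = 0$ the closed-loop matrix of Lemma~\ref{lemm:minimalstability} is $A + BN = A + kBC$, which is $\rho$-Schur by assumption. It remains to check \eqref{eq:minimalQSR}: substituting $Q = -2LmC^*C$ and $S = (L+m)C$ from \eqref{eq:Msector}, together with $N = kC$ and $R = -2I$, a direct computation gives
\[
Q + 2\Re S^*N + N^*RN = -2(k-L)(k-m)\,C^*C,
\]
which is positive semidefinite since $C^*C \geq 0$ and $(k-L)(k-m) \leq 0$ for $k \in [m,L]$. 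Lemma~\ref{lemm:minimalstability} then yields $\rho$-minimal stability, completing hypothesis~\eqref{it:popov1}.

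With all three hypotheses in hand, Theorem~\ref{theo:popov} asserts that $(A,B,M)$ is $\rho$-hyperstable if and only if \eqref{eq:FDI} holds on $|z| = \rho$, and since $\Pi(\bar z,z) = -\Re\left((I-mH(z))^*(I-LH(z))\right)$, the inequality $\Pi(\bar z,z) \leq 0$ is precisely \eqref{eq:circleFDI}. I do not expect a serious obstacle; the only points demanding care are the bookkeeping of the $\rho$-rescaling that moves \eqref{eq:FDI} onto the radius-$\rho$ circle, and the observation that the first biconditional in Theorem~\ref{theo:popov} requires neither controllability nor strict negativity of $\Pi$, so that the single choice $N = kC$, $\varepsilon = 0$ suffices without any additional hypothesis.
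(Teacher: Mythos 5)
Your proof is correct and follows essentially the same route as the paper: verify $\rho$-minimal stability via Lemma~\ref{lemm:minimalstability}, note $\det R = (-2)^n \neq 0$, and apply Theorem~\ref{theo:popov} with the identity for $\Pi(\bar z,z)$. The only (immaterial) difference is that you take $N = kC$ with $\varepsilon = 0$ and check $Q + 2\Re S^*N + N^*RN = -2(k-L)(k-m)C^*C \geq 0$ directly, whereas the paper takes $N = mC$ (making that expression vanish identically) and recovers the general $k = (1-\varepsilon)m + \varepsilon L$ through the $\varepsilon$-perturbation built into Lemma~\ref{lemm:minimalstability}.
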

\begin{proof} 
If $A+ kBC$ is $\rho$-Schur for some $k \in [m,L]$, then $(A,B; \sigma_0)$ is $\rho$-minimally stable. Indeed, the feedback $u = mCx$ renders the left-hand side of \eqref{eq:minimalQSR} identically zero, at which point the result follows from Lemma \ref{lemm:minimalstability} with
\[
k = (1-\varepsilon)m + \varepsilon L,\quad \varepsilon \in [0,1].
\]
Since $R = -2I$ has nonzero determinant, Theorem \ref{theo:popov} applies.
\end{proof} 

If $(A,B;\sigma_0)$ is $\rho$-hyperstable for a given $\rho \in (0,1)$, then the system
\begin{equation} \label{eq:circletrajectory}
x_{t+1} = Ax_t + B\nabla f(Cx_t)
\end{equation}
is uniformly $\rho$-exponentially stable over $\mathcal{S}_0(m,L)$. More precisely, there exists $c>0$ such that
\[
\| x_t \| \leq c \rho^t \| x_0 \|, \quad t \geq 0
\] 
whenever $f \in \mathcal{S}_0(m,L)$.
Finally, suppose that the dissipation inequality \eqref{eq:LMI} corresponding to $(A,B;\sigma_0)$ admits a solution $P \succ 0$ for a given $\rho > 0$. Then
\[
\langle Px_{t+1}, x_{t+1} \rangle \leq \rho^2 \langle Px_t, x_t \rangle, \quad t \geq 0
\]
along any trajectory of \eqref{eq:circletrajectory}. In particular, $x\mapsto  \langle Px,x\rangle$ is a valid Lyapunov function, and the constant $c>0$ in the definition of $\rho$-hyperstability can be taken to be the condition number of $P^{1/2}$.

\subsection{Gradient descent} \label{subsect:GD}

As an illustration of Lemma \ref{lemm:circle}, consider the convergence of gradient descent for $f \in \mathcal{S}(m,L)$, where $0 < m < L$ (see \S \ref{subsect:main} for the definitions). By a coordinate shift, we can assume that $x_\star = 0$, namely $\nabla f(0) = 0$. The gradient descent iterates 
\[
x_{t+1}= x_t - \alpha \nabla f(x_t)
\]
on $\RR^n$ are of the form \eqref{eq:dynamicalsystem} with $A =C = I_n$ and $B = - \alpha I_n$. 

Let $\rho \in (0,1)$ and $k \in [m,L]$. In order for gradient descent to converge exponentially with rate $\rho$ when applied to quadratics $x\mapsto (k/2)|x|^2$, it is necessary and sufficient for the step size $\alpha > 0$ to be such that
\begin{equation} \label{eq:gradientlinearconstraint}
\rho \geq \rho_\GD \coloneqq \max(1-\alpha m, \alpha L -1).
\end{equation}
 According to the discussion in \S \ref{subsect:sector}, to show that the gradient descent iterates satisfy \eqref{eq:gradientlinearconvergence} for a given $\rho \geq \rho_\GD$, it suffices to show that $(I_n, -\alpha I_n; \sigma_0)$ is $\rho$-hyperstable (here $\sigma_0$ is given by \eqref{eq:sectorform}). This is an application of the circle criterion, Lemma \ref{lemm:circle}:

\begin{lemm} \label{lemm:GDhyperstable}
If $\rho \geq \rho_\GD$, then $(I_n,-\alpha I_n;\sigma_0)$ is $\rho$-hyperstable
\end{lemm} 
\begin{proof}
The transfer function as in Lemma \ref{lemm:circle} is $H(z) = -\alpha (z-1)^{-1}I_n$. After multiplying by $|z-1|^2$, the frequency condition of Lemma \ref{lemm:circle}  is equivalent to 
\[
\Re  \left[ (z-1+L\alpha)(\conj z-1+m\alpha)\right] \geq 0, \quad |z| = \rho.
\]
This is a linear function of $\Re z \in [-\rho,\rho]$, so the inequality above holds if and only if it holds for $\Re z = \pm \rho$. In other words, we must have
\begin{equation} \label{eq:gradientonepointFDI}
(\rho-1+L\alpha)( \rho -1 + m\alpha) \geq 0, \quad (\rho + 1-L\alpha)(\rho +1 - m\alpha) \geq 0.
\end{equation}
The first inequality holds if and only if $\rho \geq 1-\alpha m $, whereas the second holds if and only if $\rho \geq \alpha L -1$. In particular, any $\rho \geq \rho_{\GD}$ is feasible for \eqref{eq:FDI}. Furthermore, 
\[
A + kBC = (1 - \alpha k)I_n,
\]
so the eigenvalues of $A+kBC$ are certainly contained in the open disk of radius $\rho$ when $\rho \geq \rho_\GD$ and $k\in (m,L)$. Thus $(I_n,-\alpha I_n;\sigma_0)$ is $\rho$-minimally stable. Finally, Lemma \ref{lemm:circle} implies that  $(I_n,-\alpha I_n;\sigma_0)$  is $\rho$-hyperstable.
\end{proof}

In this case $(A,B) = (I_n, -\alpha I_n)$ is clearly controllable, so  \eqref{eq:LMI} with $\rho = \rho_\GD$ admits a solution $P \succeq 0$, which from the underlying block structure can assumed to be a multiple of the identity --- cf. the remarks in \cite[\S 4.2]{lessard2016analysis}. Furthermore, $P \succ 0$ by Lemma \ref{lemm:observable}.\footnote{In reference to the remarks preceding Lemma \ref{lemm:observable}, note that $\Pi(\bar z,z) \equiv 0$ when $|z| = \rho_\GD$ and $\alpha = 2/(m+L)$.}  Since $P$ has condition number equal to one, the constant $c>0$ in the definition of hyperstability can be chosen as $c=1$. Gradient descent is sufficiently simple that $P$ can also be found directly; this is precisely what is done in \cite{lessard2016analysis}, and we review the procedure to compare with Lemma \ref{lemm:GDhyperstable}.

Rather than parameterizing $P = pI_n$ for some $p > 0$, it is convenient to fix $p=1$ and instead introduce a multiplier $\lambda \geq 0$ for the constraint matrix associated to $\sigma_0$. Thus the aim is to minimize $\rho \geq \rho_\GD$ subject to feasibility of the LMI
\begin{equation} \label{eq:GDLMI}
\begin{bmatrix}
1 - \rho^2 & -\alpha \\
-\alpha & \alpha^2 
\end{bmatrix} + \lambda \begin{bmatrix}
-2Lm & L+m \\
L+m & -2
\end{bmatrix} \preceq 0.
\end{equation}
From the lower right entry, $\lambda$ must satisfy $\lambda \geq \alpha^2/2$. If $\alpha = 2/(L+m)$, then $\rho_\GD = (\kappa -1)/(\kappa+1)$, and \eqref{eq:GDLMI} is feasible for this value of $\rho$ by taking 
\[
\lambda = \alpha^2/2.
\]
If $\alpha \neq 2/(L+m)$, then feasibility for a given $\rho > 0$ requires that $\lambda > \alpha^2/2$. In that case, by Schur complements, feasibility holds if and only if $\rho$ satisfies
\[
\rho^2 \geq 1- 2Lm \lambda + \frac{((L+m)\lambda -\alpha)^2}{2\lambda -\alpha^2}.
\]
If $\alpha \neq 2/(L+m)$, then right hand side is minimized when 
\[
\lambda = \begin{cases} \alpha(1-\alpha m)/(L-m) &\text{ if } \alpha < 2/(L+m), \\
\alpha(\alpha L - 1)/(L-m) &\text{ if }  \alpha > 2/(L+m). \end{cases}
\]
This indeed yields $\rho \geq \rho_\GD$. Compare this to Lemma \ref{lemm:GDhyperstable}: the frequency-domain test requires minimizing $\rho$ subject to the trivial inequalities \eqref{eq:gradientonepointFDI}, which does not involve the decision variable $p$ (or equivalently $\lambda$).

\subsection{Jury--Lee criterion} \label{subsect:ZF}
As in \S \ref{subsect:sector}, consider an LTI system of the form \eqref{eq:dynamicalsystem}, and let $C \in \RR^{n\times d}$. As usual, let $0< m < L$.
 Although the sector IQC is certainly satisfied by gradients of functions in $\mathcal{F}_0(m,L)$, it does not exploit the fact that \eqref{eq:monotone} holds for arbitrary pairs $(y_1,y_2)$. More precisely, the sector IQC describes a certain property satisfied by the current iterate, but ignores that the same relationship holds between all the previous iterates as well.
  
  To capture memory of this additional structure for the most recent previous iterate, augment $x \in \ell_d$ with a lag variable $v \in \ell_n$ evolving according to
\[
v_{t+1} = LCx_t - u_t.
\]
The augmented state $(x,v)$ also satisfies an LTI system of the form \eqref{eq:dynamicalsystem} with system matrices $(\hat A, \hat B)$ given by
\begin{equation} \label{eq:hatAB}
\hat A = \begin{bmatrix}
A & 0 \\
LC & 0
\end{bmatrix}, \quad \hat B= \begin{bmatrix}
B \\ -I
\end{bmatrix}.
\end{equation}
With $y = Cx$ and $u = \nabla f (y)$, we then have $v_t = Ly_{t-1} - \nabla f(y_{t-1})$, which allows us to encode the constraint \eqref{eq:monotone} for the pair $(y_t, y_{t-1})$. 
More precisely, given $\tau \geq 0$, consider the IQC defined by the quadratic form
\begin{equation} \label{eq:augmentedform}
	\begin{aligned} 
\sigma_\tau(x,v,u) &= \sigma_\tau(x,v,u ; m,L,C) \\ &\coloneqq 2\langle u - mCx, LCx - u\rangle - 2\tau^2 \langle u-m Cx, v\rangle.
\end{aligned} 
\end{equation}
When $\tau = 0$ the form $\sigma_\tau$ is independent of $v$, and agrees with \eqref{eq:sectorform}. In \cite{lessard2016analysis}, the IQC defined by $\sigma_\tau$ is termed the off-by-one IQC. It is part of a larger class of so-called Zames--Falb IQCs satisfied by gradients of strongly convex functions \cite{boczar2015exponential,fetzer2017absolute}, but we will not have opportunity to use other IQCs from this family.

If $0 \leq \tau \leq \rho \leq 1$, then gradients of strongly convex function satisfy the $\rho$-IQC defined by $\sigma_\tau$ in the following precise sense. 

\begin{lemm} [{\cite[Lemma 10]{lessard2016analysis}}] \label{lemm:offbyone}
	 Let $L>m>0$, and suppose that $f \in \mathcal{F}_0(m,L)$. Given a real valued $x \in \ell_d$, define $v \in \ell_n$ by
	\begin{equation} \label{eq:vdef}
	v_t = LCx_t - \nabla f(Cx_t), \quad v_0 = 0. 
	\end{equation}
	If $\rho >0$ and $0 \leq \tau \leq \rho \leq 1$, then $(x,v,\nabla f(Cx)) \in \IQC(\sigma_\tau; \rho)$.
\end{lemm}

Suppose that $(\hat A, \hat B; \sigma_\tau)$ is $\rho$-hyperstable for a given $\rho \in (0,1]$ and $\tau \in [0,\rho]$. Under the hypotheses of Lemma \ref{lemm:offbyone}, there exists $c>0$ such that
\[
\| x_t \| \leq c \rho^k\|x_0\|, \quad t \geq 0
\]
whenever $x_{t+1} = Ax_t + B\nabla f(Cx_t)$ and $f \in \mathcal{F}_0(m,L)$. Note that $v_0 = 0$, so this term does not appear on the right-hand side.

\begin{rema} For a fixed $\rho \in (0,1]$, feasibility of \eqref{eq:LMI} with $\tau \in [0,\rho]$ as an additional decision variable is a convex program. This is because
	\[
	\sigma_\tau = \frac{\lambda \sigma_0 + \sigma_{\rho}}{1+\lambda}, \quad \lambda = (\rho/\tau)^2 -1 ,
	\]
	so it suffices to consider conic combinations of the fixed IQC $\sigma_\rho$ and the sector $\IQC$ $\sigma_0$ with multipliers as additional linear decision variables. In particular, it suffices to prove Lemma \ref{lemm:offbyone} for the case $\tau = \rho$.

\end{rema}

Next, we address the analogue of Lemma \ref{lemm:circle}. To compute the Popov function corresponding to $(\hat A, \hat B; \sigma_\tau)$, observe that with respect to the splitting $(x,v)$ of the state,
\[
(zI -\hat A)^{-1} = \begin{bmatrix}
	(zI-A)^{-1} & 0 \\
	z^{-1} LC(z-A)^{-1} & z^{-1}I
\end{bmatrix}, \quad (zI-\hat A)^{-1}\hat B = \begin{bmatrix}
(zI-A)^{-1}B \\ z^{-1}(LH(z) - I)
\end{bmatrix}, 
\]
where 
$H(z) = C(zI-A)^{-1}B$. A straightforward computation shows that the Popov function satisfies 
\[
\Pi(\bar z, z) = -2\Re \left[ (1-\tau^2/z)(I - mH(z))^* (I-LH(z)) \right].
\]

\begin{lemm} \label{lemm:jurylee}
	Let $\rho > 0$ and $\tau \geq 0$. If $B \neq 0$ and there exists $\varepsilon \in [0,1]$ such that
	\begin{equation} \label{eq:minimalaugmented}
	\begin{bmatrix}
	A + \left( (1-\varepsilon)m + \varepsilon L\right)BC & -\varepsilon \tau^2 B \\
	(1-\varepsilon)(L-m)C & \varepsilon \tau^2 I  
	\end{bmatrix}
	\end{equation}
	is $\rho$-Schur,
 then $(\hat A; \hat B;\sigma_\tau)$ is $\rho$-hyperstable if and only if 
	\begin{equation} \label{eq:jurylee}
	\Re \left[ (1-\tau^2 /z)(I - mH(z))^*(I-LH(z)) \right]\succeq 0 
	\end{equation}
	whenever $|z| = \rho$ and $\det(A-zI) \neq 0$.
\end{lemm}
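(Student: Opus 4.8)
The plan is to mirror the proof of Lemma \ref{lemm:circle}: establish that the augmented triple $(A_\aug,B_\aug,M)$ is $\rho$-minimally stable, check the two remaining hypotheses of Theorem \ref{theo:popov}, and then read off the frequency condition from the stated formula for the Popov function. After the reduction $(A_\aug,B_\aug)\mapsto(\rho^{-1}A_\aug,\rho^{-1}B_\aug)$ we may take $\rho=1$ throughout.

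The substantive step is minimal stability, which I would establish through Lemma \ref{lemm:minimalstability}. In the block decomposition \eqref{eq:QSR} of the matrix $M$ in \eqref{eq:MZF} we have $R=-2I$, $S=\begin{bmatrix}(L+m)C & -\gamma I\end{bmatrix}$, and $Q$ the upper-left block. The key idea, exactly as in the circle-criterion case, is to choose the linear feedback $u=my$, that is $N=\begin{bmatrix}mC & 0\end{bmatrix}$, which annihilates the factor $u-my$ in the external variable $w$ and hence makes the left-hand side of \eqref{eq:minimalQSR} vanish identically; a direct block computation confirms $Q+2\Re S^*N+N^*RN=0$. Since $\|R\|=2$, the admissible range in Lemma \ref{lemm:minimalstability} is $\varepsilon\in[0,1]$, matching the statement. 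For such $\varepsilon$ the perturbed gain is $N+\varepsilon(S+RN)=\begin{bmatrix}((1-\varepsilon)m+\varepsilon L)C & -\varepsilon\gamma I\end{bmatrix}$, and substituting this into $A_\aug+B_\aug\bigl(N+\varepsilon(S+RN)\bigr)$ reproduces precisely the matrix in \eqref{eq:minimalaugmented}. Thus the $\rho$-Schur hypothesis of the lemma is exactly the hypothesis of Lemma \ref{lemm:minimalstability}, yielding $\rho$-minimal stability.

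It remains to invoke Theorem \ref{theo:popov}. Hypothesis \eqref{it:popov2} holds because $R=-2I$ is invertible, so $S(zI-A_\aug)^{-1}B_\aug+R\to R$ as $z\to\infty$ and its determinant is not identically zero; hypothesis \eqref{it:popov3} holds because the lower block of $B_\aug$ is $-I$, whence $B_\aug\neq 0$. The theorem then asserts that $(A_\aug,B_\aug,M)$ is hyperstable if and only if $\Pi(\bar z,z)\leq 0$ on $\{|z|=\rho:\det(A_\aug-zI)\neq 0\}$. Since $A_\aug$ is block lower triangular, $\det(A_\aug-zI)=(-z)^n\det(A-zI)$, so on the circle $|z|=\rho>0$ the excluded set is exactly $\det(A-zI)=0$, matching the qualification in \eqref{eq:jurylee}. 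Finally, substituting the stated identity $\Pi(\bar z,z)=-\Re\bigl((1-\gamma/z)(I-mH(z))^*(I-LH(z))\bigr)$ converts $\Pi(\bar z,z)\leq 0$ into \eqref{eq:jurylee}, which gives both directions.

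I expect the main obstacle to be purely computational: carrying out the block-matrix algebra that simultaneously verifies the identity $Q+2\Re S^*N+N^*RN=0$ and the coincidence of the closed-loop matrix $A_\aug+B_\aug\bigl(N+\varepsilon(S+RN)\bigr)$ with \eqref{eq:minimalaugmented}. Everything else — the poles-at-zero bookkeeping for the frequency qualification, the invertibility of $R$, and $B_\aug\neq 0$ — is routine once these two identities are in hand.
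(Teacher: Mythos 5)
Your proposal is correct and follows the paper's own proof exactly: feedback $u=mCx$ (i.e.\ $N=\begin{bmatrix}mC & 0\end{bmatrix}$) annihilates the left-hand side of \eqref{eq:minimalQSR}, the perturbed gain $N+\varepsilon(S+RN)$ reproduces \eqref{eq:minimalaugmented}, and $\det R\neq 0$ lets Theorem \ref{theo:popov} apply. The block computations you flag as the main obstacle do check out, and your observation that $\det(A_\aug-zI)=(-z)^n\det(A-zI)$ (so the excluded frequencies coincide on $|z|=\rho>0$) is a correct detail the paper leaves implicit.
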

\begin{proof} 
	As in Lemma \ref{lemm:circle}, the control $u = mCx$ makes the left-hand side of \eqref{eq:minimalQSR} vanish. The $\rho$-Schur property of \eqref{eq:minimalaugmented} verifies the remaining hypotheses of Lemma \ref{lemm:minimalstability}. Again, since $R = -2I$ has nonzero determinant, Theorem \ref{theo:popov} applies.
\end{proof}

  When $\rho =1$, the frequency condition \eqref{eq:jurylee} was first obtained by Jury--Lee \cite{jury1966stability}. Its relationship with the multiplier  approach of Zames--Falb was recognized immediately \cite{o1967frequency}. 
  
  One can also formally derive \eqref{eq:jurylee} from the KYP lemma via a Lur'e--Postnikov-type Lyapunov function of the form 
\begin{equation} \label{eq:lyapunovequivalent}
V(x) = \langle P \tilde x, \tilde x\rangle 
+ f(Cx) - f(0) - (m/2)\|Cx\|^2, \quad \tilde x = (x,\nabla f(Cx)),
\end{equation}
where $f \in \mathcal{F}_0(m,L)$ \cite{ahmad2014less}. In general it is necessary to consider the quadratic part as a function of the augmented state $\tilde x$ rather than the original state $x$. 

 Several recent works have considered Lyapunov functions for the analysis of optimization methods in the form \eqref{eq:lyapunovequivalent} (e.g., \cite{taylor2018lyapunov,taylor2019stochastic}), which in an appropriate sense is equivalent to the off-by-one IQC: as shown in \cite{ahmad2014less}, from the exponential decrease condition $V(x_{t+1}) \leq \rho V(x_t)$ one can derive an inequality of the form \eqref{eq:LMI} whose frequency-domain dual is equivalent to the Jury--Lee inequality \eqref{eq:jurylee}.

A more restrictive class of Lyapunov functions is
\begin{equation} \label{eq:weaklyapunov}
V(x) = \langle Px,x\rangle + f(Cx) - f(0) - (m/2)\| Cx \|^2.
\end{equation}
It is shown in \cite{hu2017dissipativity} that such a Lyapunov function suffices to recover standard convergence results for Nesterov's accelerated method. Using the constraints derived  in \cite[Lemma 4.1]{fazlyab2018analysis} or \cite{hu2017dissipativity}, one formally recovers from \eqref{eq:weaklyapunov} an earlier frequency condition due to Jury--Lee \cite{jury1964stability}.

\subsection{Nesterov's accelerated method} \label{subsect:nesterov}

Nesterov's accelerated gradient method for strongly convex functions on $\RR^n$ is of the form
\begin{equation} \label{eq:nesterov} 
	\begin{aligned}
		\xi_{k+1} &= \xi_k + \beta(\xi_k - \xi_{k-1}) - \alpha \nabla f(y_k), \\
		y_k &= \xi_k + \beta(\xi_k - \xi_{k-1}),
	\end{aligned}
\end{equation} 
where $\xi_k \in \RR^{n}$ (cf. the more general recursion \eqref{eq:degree2}). This is a feedback system of the form \eqref{eq:lure} with state $(\xi_k, \xi_{k-1})$ and system matrices
\[
A = \begin{bmatrix}
	1 + \beta & -\beta \\
	1 & 0
\end{bmatrix} \otimes I_n, \quad B = \begin{bmatrix}
	-\alpha \\ 0  
\end{bmatrix} \otimes I_n, \quad C = \begin{bmatrix}
	1+\beta& -\beta
\end{bmatrix} \otimes I_n.
\]
For the remainder of this section fix $0 < m < L$, and specify the parameters
\begin{equation} \label{eq:standardtuning}
	\alpha \coloneqq \frac{1}{L}, \quad \beta \coloneqq \frac{\sqrt{\kappa}-1}{\sqrt{\kappa}+1},
\end{equation}
where recall $\kappa = L/m$.
We refer to these choices as the \emph{standard tuning} for Nesterov's method. Also define
\[
\bar \rho_\AG = \bar \rho_\AG(m, L) = \sqrt {1 - \kappa^{-1/2}}.
\]
Nesterov's classical result \cite[\S 2.2]{nesterov2018lectures} states that for each $f \in \mathcal{F}(m,L)$ the iterates \eqref{eq:nesterov} (with the standard tuning) satisfy 
\[
\| \xi_{t} - \xi_\star \| \leq c \bar \rho_\AG^{\, t}\| \xi_0 - \xi_\star\|, \quad t \geq 0.
\]
Here $c>0$ is independent of $f$, and $\xi_\star$ is the global optimizer of $f$. 

By numerically solving the feasibility problem for \eqref{eq:LMI} using the off-by-one IQC, it was observed in \cite{lessard2016analysis} that the rate $\bar \rho_\AG$ is conservative. For a graphical illustration, see Figure \ref{fig:nesterov}. In Proposition \ref{prop:nesterovstandard} below, we give an analytic formula for the best rate that can be certified using the Jury--Lee criterion. For this we take $\tau = \rho$, but a more involved analysis shows that this rate is actually optimal over all admissible $\tau$; the details are omitted, although it can already be seen numerically in Figure \ref{fig:nesterov}.
\begin{prop} \label{prop:nesterovstandard}
	Let $0 < m < L$, and define $(\alpha,\beta)$ by \eqref{eq:standardtuning}. Denote by $r_- > 0$ the smallest positive root of
	\begin{equation} \label{eq:nesterovdiscriminant}
		\begin{aligned}
		h(r) &= 8(1-\sqrt{\kappa})^4 - (1-\sqrt{\kappa})^2(15\kappa -10\sqrt{\kappa} -1)r \\ &+ 2\kappa (3\kappa -4\sqrt{\kappa} + 1)r^2 + \kappa^2r^3,
	\end{aligned}
	\end{equation}
and set
\[
\rho_\AG = \rho_\AG(m,L) = r_-^{1/2}.
\]
If $\rho \geq \rho_\AG$, then there exists $c>0$ such that for any $f \in \mathcal{F}(m,L)$ the iterates \eqref{eq:nesterov} satisfy
	\[
	\| \xi_t - \xi_\star \| \leq c\rho^t \|\xi_0 - \xi_\star\|, \quad t \geq 0.
	\]
Furthermore, $\rho_\AG < \bar \rho_\AG$.
\end{prop}

The discriminant of $h(r)$ is positive, and it is easy to check that $h(0) > 0$ and $h'(0) < 0$. Consequently, $h(r)$ has one negative root and two positive roots $0 < r_- < r_+$. Standard asymptotic analysis reveals that
\[
r_- \sim 1 - \tfrac{4}{3}\kappa^{-1/2}, \quad \kappa \rightarrow \infty.
\]
In particular, for large $\kappa$, Nesterov's method with its standard tuning converges at a rate
\[
\rho_\AG \sim 1- \tfrac{2}{3}\kappa^{-1/2}, \quad \kappa \rightarrow \infty.
\]
Meanwhile, the more conservative classical rate has the asymptotics $\bar \rho_\AG \sim 1-\tfrac{1}{2}\kappa^{-1/2}$.

In order to prove Proposition \ref{prop:nesterovstandard}, we begin by examining the frequency condition \eqref{eq:jurylee}. Because of its block structure, for the purposes of verifying \eqref{eq:FDI} we can assume that the transfer matrix $H(z) = C(zI-A)^{-1}B$ is scalar, given by
\[
H(z) = -\alpha \frac{(1+\beta)z-\beta}{(z-1)(z-\beta)}.
\]
If $|z| = \rho$ for a given $\rho > 0$ and we fix $\alpha = 1/L$, then
\[
|z-1|^2|z-\beta|^2\Re \left [(1-\rho^2/z)(m H(z)-I)^*(LH(z)-I) \right] \\ 
= F(\Re z, \rho),
\]
where
\begin{align*}
	F(t,\rho) &\coloneqq \rho^2 \beta(1-\kappa)  + \rho^4((2+\beta)\kappa - 1 -\beta) \\ &+ (\rho^2(1+2\beta)(1-\kappa) - \rho^4\kappa)t + 2\beta(\kappa-1)t^2.
\end{align*}
If $\beta \geq 0$, then $F(t,\rho)$ is a convex quadratic function of $t$, and $\partial_t F(0,\rho) < 0$ for any $\rho$ since $\kappa > 1$. 

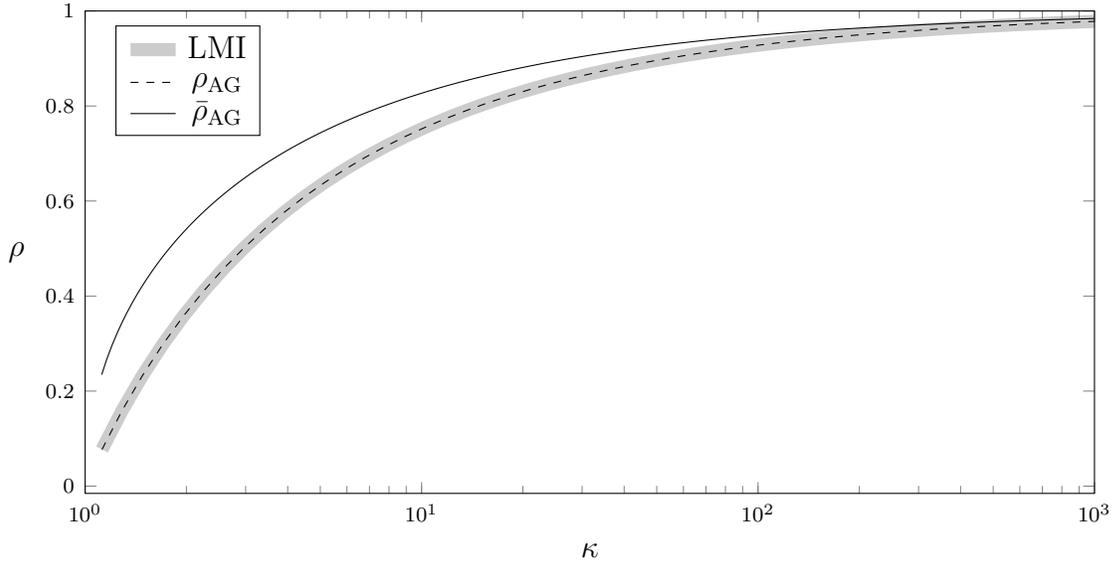
\begin{figure}
	\begin{center}
		\begin{tikzpicture}[
			 declare function ={ barrhoag(\k) = sqrt(1-1/sqrt(\k));
			},]
			\pgfkeys{/pgf/number format/1000 sep={}}
			\begin{axis}[
				height=8cm, width=15cm, xlabel=$\kappa$, ylabel=${\rho}$, ticklabel style = {font=\tiny}, legend pos = north west,
				ylabel near ticks, 
				axis line style={-},
				xmode = log,
				xmin = 1,
				xmax = 1000,
				ymax = 1,
				xlabel style={yshift=0cm},
				ylabel style={rotate=-90},
				]
				\addplot[color=black, opacity=.2, line width=5pt] table[col sep=comma,header=false,x index=0,y index=1] {fgm_off_by_one.csv};					
				\addplot[color=black, dashed] table[col sep=comma,header=false,x index=0,y index=1] {fgm_cubic_root.csv};
				\addplot[color=black, domain = 1.12 : 1000, samples=200] {barrhoag(x)};
				\legend{LMI, $\rho_\AG$, $\bar\rho_\AG$}
			\end{axis}
		\end{tikzpicture}
	\end{center}
	\caption{A comparison of the improved rate $\rho_\AG$ from Proposition \ref{prop:nesterovstandard} versus the classical rate $\bar \rho_\AG$ as a function of $\kappa$. The thick gray line is the numerically computed optimal $\rho$ such that \eqref{eq:LMI} is feasible for $P \succeq 0$ using the off-by-one IQC (over all admissible $\tau$). \label{fig:nesterov}}
\end{figure}

Now fix $\beta$ according to \eqref{eq:standardtuning}. The remarks above show that $F(t,\rho)$ is nonnegative for $t \in [-\rho,\rho]$ if and only if it is nonnegative for $t \in [0,\rho]$.  Define
\[
\rho_\star \coloneqq \inf \{\rho > 0: F(t,\rho) \geq 0 \text{ for all $t \in [0,\rho]$} \}.
\]
This infimum is finite, since it can be checked that $\rho =1$ is always feasible.  In fact, $\rho_\star < 1$ since the derivative of $\rho \mapsto F(\rho, \rho)$ at $\rho = 1$ is $-1$ and $\partial_t F(t,1)$ is strictly negative for $t\in [0,1]$. By continuity,
 \[
F(t, \rho_\star) \geq 0,\quad t \in [0,\rho_\star].
\]
It is easy to see that $\rho_\star > 0$ as expected, since $F(0,\rho) < 0$ for arbitrarily small positive $\rho$.  Note that any root of $F(t,\rho_\star)$ must be nonnegative, since $F(0,\rho_\star) \geq 0$ and $\partial_t F(0,\rho_\star) < 0$.

In order to compute $\rho_\star$, we argue that
 \[
\mathrm{disc}(F(t,\rho_\star);t) = 0.
 \]
Consider the alternatives: \begin{inparaenum}
 	\item If the discriminant is negative, then $F(t,\rho_\star) > 0$ for all $t \in [0,\rho_\star]$. The latter is an open condition with respect to $\rho$, which contradicts the definition of $\rho_\star$ as an infimum.
 	
 	\item If the discriminant is positive, then we must have $\partial_t F(\rho_\star, \rho_\star) \leq 0$, otherwise $F(t,\rho_\star)$ would be negative somewhere in the interval $[0, \rho_\star]$. Furthermore, 
 	\[
 	F(\rho_\star, \rho_\star) =0,
 	\] 
 	since otherwise we would again have $F(t, \rho_\star) > 0$ for $t \in [0,\rho_\star]$. Now 
 	\[
 	F(\rho, \rho) = \rho^2(1-\rho)((1-\rho)\sqrt{\kappa} -1)^2,
 	\]
 	which would imply $\rho_\star = 1 - 1/\sqrt{\kappa}$. In that case, however, a direct calculation shows that 
 	\[
 	\partial_t F(\rho_\star, \rho_\star) = 2\kappa^{-1}(\sqrt{\kappa} - 1)^3 > 0,
 	\]
 	which is a contradiction. 
 \end{inparaenum}

\begin{proof} [Proof of Proposition \ref{prop:nesterovstandard}]
The discriminant of $F(t,\rho)$ with respect to $t$ is $\rho^2 h(\rho^2)$, where $h(r)$ is the cubic polynomial \eqref{eq:nesterovdiscriminant}. Therefore $\rho_\AG \coloneqq \rho_\star$ is the square root of the smallest positive root of $h(r)$. To apply Lemma \ref{lemm:jurylee}, we verify that \eqref{eq:minimalaugmented} is $\rho_\AG$-Schur when $\varepsilon = 0$. Owing to the block structure, this is the same as showing that the matrix
\[
 \begin{bmatrix}
2(1- 1/\sqrt{\kappa}) & -(1-1/\sqrt{\kappa})^2 & 0 \\
1 & 0 & 0 \\
2 L(1-1/\sqrt{\kappa}) &-L(1-1/\sqrt{\kappa})^2 & 0
\end{bmatrix} 
\] 
is $\rho_\AG$-Schur. The eigenvalues of this matrix are $0$ and $1-1/\sqrt{\kappa}$. 
Now compute
\[
h((1-1/\sqrt{\kappa})^2) = \frac{4(\sqrt{\kappa}-1)^6}{\kappa^2} >0, \quad h'((1-1/\sqrt{\kappa})^2) = \frac{-12(\sqrt{\kappa} -1)^5}{\kappa} < 0.
\]
The cubic nature of $h$ implies that $1-1/\sqrt{\kappa} < \rho_\AG$, so the matrix above is indeed $\rho_\AG$-Schur. 

To finish the proof of Proposition \ref{prop:nesterovstandard}, it remains to show that $\bar \rho_\AG > \rho_\AG$. For this, compute
\[
h(\bar \rho_\AG^2) = \frac{(\sqrt{\kappa}-1)^4}{\kappa} >0, \quad  h'(\bar \rho_\AG^2) = \frac{2(\sqrt{\kappa} -1)^3(3\sqrt{\kappa}+1)}{\sqrt{\kappa}} > 0.
\]
Again, the cubic nature of $h(r)$ implies that $\bar \rho_\AG^2 > r_+$, and hence $\bar \rho_\AG > \rho_\AG$.
\end{proof}

\subsection{Inexact inputs} \label{subsect:noise}
We begin this section with some abstract considerations, and then discuss applications to feedback systems. Consider an LTI system of the form
\[
x_{t+1} = Ax_t + B_{ue}\begin{bmatrix}
	u_t \\ e_t
\end{bmatrix}, \quad B_{ue} = \begin{bmatrix}
B_u & B_e
\end{bmatrix},
\]
where $x \in \ell_d$ and $u,e \in \ell_n$. We assume that $(u,e)$ satisfy the IQC $\sigma_\Delta(u_t, e_t) \geq 0$ for all $t \geq 0$, where
\[
\sigma_\Delta(u, e) = \delta^2 \|u\|^2 - \|e \|^2
\]
for some fixed $\delta \in [0,1)$. We also fix some reference quadratic form 
\[
\sigma_\refe(x,u) =  \langle Qx,x\rangle + 2 \Re\langle Sx,u \rangle + \langle Ru,u\rangle,
\]
which we insist is independent of $e$. Our goal is to characterize the $\rho$-hyperstability of $(A, B_{ue}; \sigma_\refe, \sigma_\Delta)$. As discussed in \S \ref{subsect:feedback}, it suffices to show that $(A,B_{ue}; \sigma_\refe + \lambda \sigma_\Delta)$ is $\rho$-hyperstable for some $\lambda \geq 0$.

 Note that $\rho$-hyperstability of $(A, B_{ue};\sigma_\refe+ \lambda \sigma_\Delta)$ implies the same for $(A,B_u; \sigma_{\mathrm{ref}})$. It is also easy to see that $\rho$-minimal stability of $(A,B_u; \sigma_{\mathrm{ref}})$ implies the same for $(A,B_{ue}; \sigma_{\mathrm{ref}}+ \lambda \sigma_\Delta)$:

\begin{lemm} \label{lemm:perturbedminimallystable}
	Let $\rho > 0$. If $(A,B_u; \sigma_{\mathrm{ref}})$ is $\rho$-minimally stable, then $(A, B_{ue} ;\sigma_\refe+ \lambda \sigma_\Delta)$ is $\rho$-minimally stable for any $\lambda \geq 0$.
\end{lemm}
\begin{proof}
	Given $x^0 \in \CC^m$, let $(x,u) \in \sol(A,B_u) \cap \IQC(\sigma_\refe; \rho)$ be as in the definition of $\rho$-minimal stability. Obviously 
	\[ 
	(x,u,0) \in \sol(A, B_{ue}) \cap \IQC(\sigma_\refe+ \lambda \sigma_\Delta; \rho),
	\]
	since adding $\lambda \delta^2 |u_t|^2$ to the left-hand side of \eqref{eq:IQC} has a favorable sign.
\end{proof} 

Similarly, if $(A,B_u)$ is controllable, then so is $(A, B_{ue})$. Finally, observe that according to Lemma \ref{lemm:Rconstraint}, a necessary condition for hyperstability of $(A, B_{ue};\sigma_\refe+ \lambda \sigma_\Delta)$ is 
\begin{equation} \label{eq:Rlambdaconstraint}
	\lambda \delta^2 I \preceq -R.
\end{equation}
This provides an a priori bound on $\lambda$ that will prove useful later on.

Next, we record the form taken by \eqref{eq:FDI} for the triple $(A,B_{ue}, \sigma_\refe + \lambda \sigma_\Delta)$. Let $\Pi_{ue}(\zeta,z)$ denote the Popov function corresponding to this triple. Define
\[
G_u(z) \coloneqq (zI-A)^{-1}B_u, \quad G_e(z) \coloneqq (zI -A)^{-1}(B_u - B_e).
\]
The reason for considering the difference $B_u - B_e$ is that in our applications this term either vanishes or is otherwise easy to handle. Denote by 
\[
\Pi_u(\bar z,z) = G_u(z)^*QG_u(z) + 2\Re SG_u(z) + R
\] 
 the Popov function for the triple $(A,B_u;\sigma_\refe)$. Similarly, let $\Pi_e(\bar z,z)$ denote the Popov function for $(A,B_u - B_e; \sigma_\refe )$. Finally, define the off-diagonal part
\[
\Psi(z) = G_e(z)^*QG_u(z) + SG_u(z) + (SG_e(z))^* + R.
\]
The Popov function corresponding to $(A, B_{ue};\sigma_\refe+ \lambda \sigma_\Delta)$ satisfies
\begin{equation}  \label{eq:noisyblockFDI}
	\begin{bmatrix}
		I & I \\
		0 & -I
	\end{bmatrix}^*
	\Pi_{ue}(\bar z, z)
	\begin{bmatrix}
		I & I \\
		0 & -I
	\end{bmatrix} = \begin{bmatrix}
		\Pi_u(\bar z, z) + \lambda \delta^2 I &\Psi(z)^* + \lambda \delta^2 I\\
		\Psi(z)+ \lambda \delta^2 I& \Pi_e(\bar z,z) - \lambda (1-\delta^2)I
	\end{bmatrix}. 
\end{equation}
Of course this conjugation preserves definiteness properties of $\Pi_{ue}(\bar z,z)$, but it leads to more transparent computations.

Now we consider the two cases of interest. First is the case of a perturbed feedback system 
\begin{equation} \label{eq:perturbedfeedback}
x_{t+1} = Ax_t + B( \nabla f(Cx_t) + e_t),
\end{equation}
where $f \in \mathcal{S}_0(m,L)$. In this case we have
\begin{equation} \label{eq:Buesector}
B_u = B_e = B.
\end{equation}
For the reference IQC we take $\sigma_\mathrm{ref} = \sigma_0$, where the sector IQC $\sigma_0$ is given by \eqref{eq:sectorform} and only acts on the $(x,u)$ variables. In particular $G_e(z) \equiv 0$, so
\[
 \Pi_e(\bar z,z) - \lambda (1-\delta^2)I = -(2+ \lambda(1-\delta^2))I.
\]
Clearly this term is negative definite for $\lambda \geq 0$; taking Schur complements in \eqref{eq:noisyblockFDI} and using that this term is a multiple of the identity, we deduce the following:
\begin{lemm} \label{lemm:perturbedFDIsector}
Fix $(A,B)$ and consider the triple $(A, B_{ue}; \sigma_0 + \lambda \sigma_\Delta)$, where $B_{ue}$ is determined by \eqref{eq:Buesector}. If $\rho > 0$, then \eqref{eq:FDI} holds if and only if
\begin{multline} \label{eq:perturbedFDIsector}
2\lambda \Re \left[ (I-mH(z))^*(I-LH(z)) \right] \\ - ((L-m)^2 +2 Lm \lambda \delta^2) H(z)^*H(z) - \lambda^2\delta^2 I \succeq 0
\end{multline}
whenever $|z| = \rho$ and $\det(A-zI) \neq 0$, where $H(z) = C(zI-A)^{-1}B$. 
\end{lemm}
This is an inexact version of the circle criterion.

\begin{rema} \label{rema:lambda=0} Lemma \ref{lemm:perturbedFDIsector}  also makes it clear that \eqref{eq:FDI} cannot hold for $\lambda = 0$ unless $H(z) \equiv 0$. 
\end{rema}

The second case also concerns the perturbed system \eqref{eq:perturbedfeedback}, but now we make the stronger assumption that $f \in \mathcal{F}_0(m,L)$. Here we take the reference IQC to be the off-by-one IQC from \S \ref{subsect:ZF}. This of course requires augmenting the original state space. Thus we consider the system
\begin{align*}
	\begin{bmatrix}
		x_{t+1} \\
		v_{t+1}
	\end{bmatrix}	= \hat A \begin{bmatrix}
		x_t \\ v_t
	\end{bmatrix} + \hat B u_t + \hat B e_t + \begin{bmatrix}
		0 \\ I
	\end{bmatrix} e_t,
\end{align*}
where $(\hat A, \hat B)$ are defined by \eqref{eq:hatAB}. The final counterterm involving $e_t$  ensures that $v_{t}$ evolves according to $v_{t+1} = LCx_t - u_t$, as needed to apply Lemma \ref{lemm:offbyone}. Thus we take
\begin{equation} \label{eq:Bueconvex}
B_u = \hat B, \quad B_e = \hat B + \begin{bmatrix}
	0 \\ I
\end{bmatrix}.
\end{equation}
Let  $\sigma_{\mathrm{ref}} = \sigma_\tau$,  where $\sigma_\tau$ is given by \eqref{eq:augmentedform}. This time, the bottom right entry of \eqref{eq:noisyblockFDI} is
\begin{equation*} \label{eq:bottomright}
 \Pi_e(\bar z,z) - \lambda (1-\delta^2)I  = (2\Re (\tau^2/z - 1) - \lambda (1-\delta^2))I.
\end{equation*}
Recall $\tau$ must be constrained by $\tau \in [0,\rho]$ in order to test for $\rho$-hyperstability. In particular, the bottom right entry of \eqref{eq:noisyblockFDI} is negative definite for $|z| = \rho$ and $\lambda \geq 0$ (except in the degenerate case $\tau = \rho=1$ and $\lambda = 0$, but this is irrelevant since negativity only fails at a single point $z = 1$). By taking Schur complements we obtain after some algebra the following:
\begin{lemm} \label{lemm:perturbedFDIconvex}
Fix $(A,B)$ and consider the triple $(\hat A, B_{ue}; \sigma_\tau + \lambda \sigma_\Delta)$, where $\hat A$ is determined by \eqref{eq:hatAB} and $B_{ue}$ is given by \eqref{eq:Bueconvex}. If $\rho > 0$ and $\tau \in [0,\rho]$, then \eqref{eq:FDI} holds if and only if
		\begin{multline} \label{eq:perturbedFDIconvex}
			2\lambda \Re \left[ (1-\tau^2/z)(I-mH(z))^*(I-LH(z)) \right]  \\- ((L-m)^2 |1-\tau^2/z|^2 + 2\lambda\delta^2 L m \Re[1-\tau^2/z]) H(z)^*H(z)-\lambda^2 \delta^2 I \succeq 0
		\end{multline}
whenever $|z| = \rho$ and $\det(A-zI) \neq 0$, where $H(z) = C(zI-A)^{-1}B$. 
\end{lemm}
This is an inexact version of the Jury--Lee criterion.
The same considerations as in Remark \ref{rema:lambda=0} show that \eqref{eq:FDI} cannot hold with $\lambda = 0$ unless $H(z) \equiv 0 $.

\section{Inexact gradient descent} \label{sect:inexactGD}

\subsection{Functions with sector bounded gradients} \label{subsect:GDSml}

This section extends the discussion  in \S \ref{subsect:GD} of gradient descent  over $\mathcal{S}(m,L)$ to the inexact case. Let $0 < m < L$ and $\delta \in (0,1)$.  Consider the iterates
\[
x_{t+1} = x_t - \alpha (\nabla f(x_t) + e_t)
\] 
on $\RR^n$, where $f \in \mathcal{S}(m,L)$ is normalized by $\nabla f(0) = 0$, and $e \in \ell_n$ is an arbitrary sequence satisfying $\| e_t \| \leq \delta \| \nabla f(x_t) \|$. Two valid choices for $e$ are
\[
e= \pm \delta \nabla f(x),
\]
 which correspond to gradient descent with step-sizes $(1\pm\delta)\alpha$. Any exponential convergence rate must therefore satisfy $\rho \geq \rho_\GD(\delta)$, where 
\begin{equation} \label{eq:gradientlinearconstraintnoise}
\rho_\GD(\delta) = \max(1-\alpha m (1-\delta), \alpha L (1+\delta) -1).
\end{equation}

We now apply the results of \S \ref{subsect:noise} to the  LTI system $x_{t+1} = x_t - \alpha (u_t + e_t)$. In the notation of \S \ref{subsect:noise}, the system matrices are $A = C = I_n$ and $B_u = B_e = -\alpha I_n$. In particular,
\[
B_{ue} = -\alpha \begin{bmatrix}
	1 & 1
\end{bmatrix} \otimes I_n
\] 
We choose $\sigma_{\mathrm{ref}} = \sigma_0$ as the reference IQC. The goal is to compute
\begin{equation} \label{eq:rhostar}
\rho_\star =  \inf\{ \rho \geq  \rho_\GD(\delta): (A,B_{ue}; \sigma_0 + \lambda \sigma_\Delta) \text{ is } \text{$\rho$-hyperstable} \text{ for some } \lambda \in (0,2\delta^{-2}]\}
\end{equation}
and show that this infimum is attained. The constraint $\lambda \in (0,2\delta^{-2}]$, which arises from \eqref{eq:Rlambdaconstraint} and Remark \ref{rema:lambda=0},  is a necessary condition for hyperstability.

\begin{lemm} 
If $\rho > 0$ and $\lambda \in (0,2\delta^{-2}]$, then $(A,B_{ue}; \sigma_0 + \lambda \sigma_\Delta)$ is $\rho$-hyperstable if and only \eqref{eq:FDI} holds.	
\end{lemm} 
\begin{proof} 	
Since $B \neq 0$, hyperstability automatically implies \eqref{eq:FDI}. The converse follows from Theorem \ref{theo:popov}: the minimal stability hypothesis is satisfied according to Lemma \ref{lemm:GDhyperstable} and Lemma \ref{lemm:perturbedminimallystable}. To verify the last hypothesis of Theorem \ref{theo:popov}, let $\Pi_{ue}(\zeta,z)$ denote the Popov function corresponding to $(A,B_{ue}; \sigma_0 + \lambda \sigma_\Delta)$. Then
\[
\Pi_{ue}(\infty,z) = \begin{bmatrix}
(L+m)H(z) +(\lambda\delta^2 -2)I & H(z) \\
0 & -\lambda I
\end{bmatrix},
\]
where $H(z) = -\alpha(z-1)^{-1}I$. Since $\lambda \neq 0$, the determinant of this rational matrix does not vanish identically.
\end{proof} 

Let $\lambda \in (0,2\delta^{-2}]$, and suppose that $z$ not an eigenvalue of $A$. We apply the inexact circle criterion, Lemma \ref{lemm:perturbedFDIsector}: if \eqref{eq:perturbedFDIsector} is multiplied through by $|z-1|^2$, then $\Pi_{ue}(\bar z,z) \preceq 0$ if and only if
\begin{multline} \label{eq:onepointnoiseFDI}
-\alpha^2 (L-m)^2+2mL \alpha^2\lambda(1-\delta^2) \\ +2\alpha\lambda (m+L)\Re (z-1) 
+\lambda(2 -\delta^2\lambda) |z-1|^2 \geq 0.
\end{multline}
When restricted to the circle of radius $\rho$, the left-hand side of \eqref{eq:onepointnoiseFDI} is a linear function of $\Re z \in [-\rho,\rho]$.  Therefore \eqref{eq:onepointnoiseFDI} holds for all $|z| = \rho$ if and only if it holds for $z= \pm \rho$. For fixed parameters $(L,m,\alpha,\delta)$, define
\begin{multline*}
F(t,\lambda) = -\alpha^2(L-m)^2+2mL \alpha^2 \lambda(1-\delta^2) \\ +2\alpha\lambda (m+L)(t-1) 
+ \lambda(2 -\delta^2\lambda)(t-1)^2.
\end{multline*}
Thus \eqref{eq:onepointnoiseFDI} holds for a given $\lambda \geq 0$ if and only if $F(\pm \rho,\lambda) \geq 0$ for both choices of sign. In particular, $\rho_\star$ can be computed as 
\[
\rho_\star = \inf\{\rho \geq \rho_{\GD}(\delta): F(\pm\rho,\lambda) \geq 0 \text{ for some } \lambda \in (0,2\delta^{-2}] \}.
\]
By definition, $\rho_\star \geq \rho_{\GD}(\delta)$. Since we do not impose any upper bound on $\rho$, the quadratic nature of $F$ in $t$ implies that the infimum is finite. Since $F$ is continuous and $\lambda$ is bounded, there exists $\lambda_\star \in [0,2\delta^{-2}]$ for which $F(\pm \rho_\star,\lambda_\star) \geq 0$. According to Remark \ref{rema:lambda=0}, we in fact have $\lambda_\star \in (0,2\delta^{-2}]$. Also,
\begin{equation} \label{eq:endpoints}
\begin{aligned}
F(1-\alpha m (1-\delta),\lambda) &= -\alpha^2 (L+m (\delta ^2 \lambda -\delta  \lambda -1 ) )^2,
 \\ F(1-\alpha L(1+\delta),\lambda) &= - \alpha^2 (m + L(\delta ^2 \lambda +\delta  \lambda -1 ) )^2
\end{aligned}
\end{equation} 
 for all $\lambda$. The following auxiliary lemma establishes some useful properties of $F(\pm \rho_\star, \lambda_\star)$.
 \begin{lemm} \label{lemm:inexactSproperties}
	The pair $(\rho_\star,\lambda_\star)$ satisfies the following properties.
	\begin{enumerate} \itemsep6pt
		\item $\min (F(\rho_\star,\lambda_\star), F(-\rho_\star,\lambda_\star))  = 0$.
		
		\item $\lambda_\star \in (0,2\delta^{-2})$.
		
		\item If $\max (F(\rho_\star,\lambda_\star), F(-\rho_\star,\lambda_\star)) > 0$, then $\rho_\star = \rho_\GD(\delta)$. 
	\end{enumerate}	
\end{lemm}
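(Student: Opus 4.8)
The plan is to regard $(\rho_\star,\lambda_\star)$ as the solution of the constrained minimization of $\rho$ subject to $F(\pm\rho,\lambda)\ge 0$, $\lambda\in[0,2/\delta^2]$, and $\rho\ge\rho_\GD(\delta)$, and to extract the three properties from the first-order behaviour at the optimum. Two structural features of $F$ are used throughout: for fixed $t$ it is concave in $\lambda$ (the coefficient of $\lambda^2$ is $-\delta^2(t-1)^2\le 0$), and $F(t,0)=-\alpha^2(L-m)^2<0$; moreover the two identities \eqref{eq:endpoints} exhibit $F$ at $t=1-\alpha m(1-\delta)$ and at $t=1-\alpha L(1+\delta)$ as negatives of perfect squares, hence $\le 0$ for every $\lambda$.

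I would prove (2) first. The lower bound $\lambda_\star>0$ is immediate, since $\lambda_\star=0$ would give $F(\pm\rho_\star,0)=-\alpha^2(L-m)^2<0$, violating feasibility. For the upper bound, setting $\lambda=2/\delta^2$ annihilates the coefficient of $(t-1)^2$ and leaves $F(\cdot,2/\delta^2)$ an affine increasing function of $t$, so the binding evaluation is at $t=-\rho$. A short computation using only $\rho+1\ge\alpha L(1+\delta)$ (which follows from $\rho\ge\rho_\GD(\delta)$) together with the elementary positivity of $L(1+\delta)(L+m\delta)$ shows $F(-\rho,2/\delta^2)<0$ for every $\rho\ge\rho_\GD(\delta)$; hence $\lambda=2/\delta^2$ is infeasible and $\lambda_\star<2/\delta^2$.

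For (1), feasibility gives $\min\ge 0$, so suppose both values are strictly positive. If $\rho_\star>\rho_\GD(\delta)$, then continuity of $\rho\mapsto F(\pm\rho,\lambda_\star)$ lets me decrease $\rho$ slightly while keeping both values nonnegative, contradicting that $\rho_\star$ is an infimum. If instead $\rho_\star=\rho_\GD(\delta)$, then whichever term attains the maximum defining $\rho_\GD(\delta)$ identifies $\rho_\star$ with $1-\alpha m(1-\delta)$ or with $\alpha L(1+\delta)-1$, and the corresponding identity in \eqref{eq:endpoints} forces $F(\rho_\star,\lambda_\star)\le 0$ or $F(-\rho_\star,\lambda_\star)\le 0$, again a contradiction. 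Thus $\min=0$.

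Statement (3) I would prove in contrapositive form: assuming $\rho_\star>\rho_\GD(\delta)$, I show both values vanish. By (1) one of them is zero; suppose for contradiction the other is positive, say $F(\rho_\star,\lambda_\star)=0<F(-\rho_\star,\lambda_\star)$ (the symmetric case is identical). Since $\lambda_\star$ is interior by (2), if $\partial_\lambda F(\rho_\star,\lambda_\star)\ne 0$ I can nudge $\lambda$ in the direction increasing $F(\rho_\star,\cdot)$ above $0$ while preserving $F(-\rho_\star,\cdot)>0$, reducing to the case where both values are strictly positive and then decreasing $\rho$, a contradiction. The hard case, and the main obstacle, is the degenerate one $\partial_\lambda F(\rho_\star,\lambda_\star)=0$: here $\lambda_\star$ is the vertex of the concave parabola $\lambda\mapsto F(\rho_\star,\lambda)$, so $\max_\lambda F(\rho_\star,\lambda)=0$. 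To rule this out I would compute this maximum explicitly, namely $-\alpha^2(L-m)^2+\delta^{-2}u(\rho)^2$ with $u(\rho)=(\rho-1)+\alpha(L+m)+\alpha^2 mL(1-\delta^2)/(\rho-1)$, and observe that its zero set in $\rho$ factors, through perfect-square discriminants, as $\{1-\alpha L(1-\delta),\,1-\alpha m(1+\delta),\,1-\alpha m(1-\delta),\,1-\alpha L(1+\delta)\}$, all of which are $\le 1-\alpha m(1-\delta)\le\rho_\GD(\delta)$. Consequently $\max_\lambda F(\rho_\star,\lambda)>0$ whenever $\rho_\star>\rho_\GD(\delta)$, contradicting the vertex identity; the symmetric branch uses the analogous zeros, whose largest value is $\alpha L(1+\delta)-1\le\rho_\GD(\delta)$. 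I expect verifying these factorizations, equivalently that the tangency locus of each branch sits at or below $\rho_\GD(\delta)$, to be the most delicate part of the argument.
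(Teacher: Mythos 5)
Your proposal is correct and follows essentially the same route as the paper's proof: the perturbation argument at the optimum, the use of the endpoint identities \eqref{eq:endpoints}, and the concavity of $F$ in $\lambda$ are all identical, and your explicit factorization of the zero locus of $\max_\lambda F(\pm\rho,\lambda)$ in the degenerate case of (3) is just the discriminant computation the paper performs when it solves $F=\partial_\lambda F=0$ simultaneously. The only cosmetic differences are that you establish (2) directly from the bound $\rho+1\geq\alpha L(1+\delta)$ rather than via part (1), and that you spell out all four tangency roots where the paper lists only the two compatible with $\rho_\star\geq\rho_\GD(\delta)$.
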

\begin{proof}
	\begin{inparaenum}
		\item If $F(\pm \rho_\star,\lambda_\star) >0$ for both choices of sign, then by continuity of $F$ and the definition of $\rho_\star$ we must have $\rho_\star = \rho_{\GD}(\delta)$. This contradicts \eqref{eq:endpoints}.
		
		\item  If $\lambda_\star = 2\delta^{-2}$, then $F(t,2\delta^{-2})$ reduces to a strictly increasing linear function of $t$. Since we know that $F(t,\lambda_\star)$ must vanish at one of the endpoints $\pm \rho_\star$ by the first part, it must do so at $-\rho_\star$, i.e.,
		\[
		F(-\rho_\star,2\delta^{-2}) = 0.
		\]
		On the other hand,  it follows from \eqref{eq:endpoints} that $F(1-\alpha L(1+\delta),2\delta^{-2}) <0$, so 
		\[
		-\rho_\star > 1 -\alpha L (1+\delta) \geq -\rho_{\GD}(\delta),
		\]
		 which is a contradiction.
		 
		\item Suppose that $F(\rho_\star,\lambda_\star) > 0$. By the first part we have $F(-\rho_\star,\lambda_\star) = 0$. If $\partial_\lambda F(-\rho_\star,\lambda_\star) \neq 0$ but $\rho_\star > \rho_\GD(\delta)$, then by perturbing $\rho$ and $\lambda$ slightly we can contradict the definition of $\rho_\star$. This is possible since $\lambda_\star$ is in the open interval $(0,2\delta^{-2})$. Thus we either have $\rho_\star = \rho_{\GD}(\delta)$ or $\partial_\lambda F(-\rho_\star,\lambda_\star) = 0$. But in the latter case, the simultaneous equations
		\[
		\partial_\lambda F(-\rho_\star,\lambda_\star) = 0, \quad F(-\rho_\star,\lambda_\star) = 0
		\]
		imply that $\rho_\star = \alpha L (1+\delta) -1$ or $\rho_\star =  \alpha m (1-\delta)-1$. Since $\rho_\star \geq \rho_\GD(\delta) \geq \alpha L(1+\delta)-1$, this implies $\rho_\star = \alpha L(1+\delta)-1 = \rho_\GD(\delta)$ as well. The same argument applies with roles of $\rho_\star$ and $-\rho_\star$ reversed.
	\end{inparaenum}
\end{proof}

It remains to compute $\rho_\star$. First we characterize the parameters $(m,L,\alpha,\delta)$ for which $\rho_\star = \rho_\GD(\delta)$. Recall that $\kappa = L/m$.

\begin{lemm} \label{lemm:achievedS}
	Let $0 < m< L$ and $\delta \in (0,1)$. If $\alpha > 0$, then $\rho_\star = \rho_\GD(\delta)$ if and only if one of the following conditions holds.
	\begin{enumerate} \itemsep6pt 
		\item $\delta < 2/(\kappa + 1)$ and 
		\[
		\alpha \leq \alpha_-(m,L,\delta) =  \frac{1}{1-\delta}\left( \frac{2}{L+m} - \frac{\delta}{m} \right). 
		\]
		\item $\delta \in [0,1)$ and
		\[
		\alpha \geq \alpha_+(m,L,\delta) = \frac{1}{1+\delta}\left( \frac{2}{L+m} + \frac{\delta}{L} \right).
		\]
	\end{enumerate}
\end{lemm}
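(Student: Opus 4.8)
The plan is to exploit the reduction, already implicit in the definition of $\rho_\star$, that $\rho_\star = \rho_\GD(\delta)$ holds if and only if there exists $\lambda \in [0,2/\delta^2]$ with $F(\rho_\GD(\delta),\lambda) \geq 0$ and $F(-\rho_\GD(\delta),\lambda) \geq 0$. Indeed $\rho_\star \geq \rho_\GD(\delta)$ always, and since $\lambda$ ranges over a compact interval and $F$ is continuous, feasibility at $\rho = \rho_\GD(\delta)$ is exactly what prevents $\rho_\star$ from exceeding $\rho_\GD(\delta)$. I would then split into two cases according to which term attains the maximum defining $\rho_\GD(\delta)$. Writing $t_1 = 1-\alpha m(1-\delta)$ and $t_2 = 1-\alpha L(1+\delta)$, the two cases are $\rho_\GD(\delta) = t_1$ (equivalently $\alpha \leq \alpha_0$, where $\alpha_0 = 2/((1-\delta)m + (1+\delta)L)$ is the step size equating the two terms) and $\rho_\GD(\delta) = -t_2$ (equivalently $\alpha \geq \alpha_0$).

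Consider the first case, $\rho_\GD(\delta) = t_1$. The key observation is that the endpoint identity \eqref{eq:endpoints} writes $F(t_1,\lambda)$ as a negative multiple of a perfect square, so the requirement $F(\rho_\GD(\delta),\lambda) = F(t_1,\lambda) \geq 0$ forces $F(t_1,\lambda) = 0$, which uniquely determines
\[
\lambda = \lambda_1 = \frac{\kappa-1}{\delta(1-\delta)}.
\]
A short calculation shows that the admissibility constraint $\lambda_1 \leq 2/\delta^2$ is equivalent to $\delta \leq 2/(\kappa+1)$; this is the source of the restriction on $\delta$ in the first alternative. It then remains only to test the second constraint $F(-t_1,\lambda_1) \geq 0$. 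Substituting $\lambda_1$ and expanding (the terms conspire so that the contribution $-\alpha^2(L-m)^2$ cancels), one finds that $F(-t_1,\lambda_1)$ equals, up to the positive factor $4(L-m)(L+m)/\delta$, a quadratic in $\alpha$ whose two roots are $\alpha_-(m,L,\delta)$ and $1/(m(1-\delta))$. I would then verify the ordering $\alpha_- < \alpha_0 < 1/(m(1-\delta))$ --- the first inequality reducing, after clearing denominators, to $\delta(L-m)\big((L-m)+\delta(L+m)\big) > 0$ --- so that in the regime $\alpha \leq \alpha_0$ of this case the larger root is irrelevant and $F(-t_1,\lambda_1) \geq 0$ holds precisely when $\alpha \leq \alpha_-$. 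This yields the first alternative.

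The second case, $\rho_\GD(\delta) = -t_2$, is entirely parallel: now $F(-\rho_\GD(\delta),\lambda) = F(t_2,\lambda)$ is the perfect square, forcing $\lambda = \lambda_2 = (\kappa-1)/(\kappa\delta(1+\delta))$, for which $\lambda_2 < 2/\delta^2$ holds automatically --- which is why no restriction on $\delta$ appears in the second alternative --- and the remaining constraint $F(-t_2,\lambda_2)\geq 0$ factors with roots $1/(L(1+\delta))$ and $\alpha_+(m,L,\delta)$, giving $\alpha \geq \alpha_+$ after using $1/(L(1+\delta)) < \alpha_0 < \alpha_+$. In fact the whole computation here is the image of the first case under the substitution $(m,L,\delta) \mapsto (L,m,-\delta)$, which leaves $F$ invariant while interchanging $t_1 \leftrightarrow t_2$, $\lambda_1 \leftrightarrow \lambda_2$ and $\alpha_- \leftrightarrow \alpha_+$. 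Combining the two cases and using $\alpha_- < \alpha_0 < \alpha_+$, the set of parameters for which $\rho_\star = \rho_\GD(\delta)$ is exactly $\{\alpha \leq \alpha_-\} \cup \{\alpha \geq \alpha_+\}$, as claimed.

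The main obstacle is the algebraic heart of the first case: carrying out the substitution of $\lambda_1$ into $F(-t_1,\cdot)$ and recognizing that it factors cleanly as a multiple of $(\alpha-\alpha_-)\big(\alpha - 1/(m(1-\delta))\big)$. Obtaining this factorization --- rather than an opaque quadratic --- is what makes the thresholds $\alpha_\pm$ emerge on the nose, and it must be paired with the root-ordering inequalities $\alpha_- < \alpha_0 < 1/(m(1-\delta))$ (together with their mirror images in the second case) to confirm that the two cases glue into the stated dichotomy, with $\rho_\star > \rho_\GD(\delta)$ on precisely the gap $(\alpha_-,\alpha_+)$.
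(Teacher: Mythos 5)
Your proposal is correct and follows essentially the same route as the paper: reduce to feasibility of $F(\pm\rho_\GD(\delta),\lambda)\geq 0$ at the endpoint, split on which branch of the max defines $\rho_\GD(\delta)$, let the perfect-square identity \eqref{eq:endpoints} pin down $\lambda$ uniquely, and read off the $\delta$-restriction from $\lambda \leq 2/\delta^2$ and the $\alpha$-threshold from the remaining endpoint inequality. The only (cosmetic) difference is that the paper converts the second endpoint condition into the linear statement $\partial_t F(0,\lambda_\star)\leq 0$ via the quadratic identity $F(-\rho)=-2\rho\,\partial_tF(0)$ when $F(\rho)=0$, whereas you factor $F(-t_1,\lambda_1)$ directly as a quadratic in $\alpha$ with roots $\alpha_-$ and $1/(m(1-\delta))$ and check the root ordering; both yield the same thresholds.
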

\begin{proof}
We have $\rho_\GD(\delta) = 1-\alpha m(1-\delta)$ or $\rho_\GD(\delta) = \alpha L (1+\delta)-1$ depending on which is larger.

\begin{inparaenum} \item First suppose that $\rho_\GD(\delta) = 1-\alpha m(1-\delta)$. If $\rho_\star = \rho_\GD(\delta)$, then from \eqref{eq:endpoints} we must have $F(\rho_\star, \lambda_\star) = 0$, which uniquely determines
\begin{equation} \label{eq:lambdastar1}
\lambda_\star = \frac{\kappa-1}{\delta(1-\delta)}.
\end{equation}
 Now $F$ is quadratic in $t$, so $F(-\rho_\star,\lambda_\star) \geq 0$ if and only if $\partial_t F(0,\lambda_\star) \leq 0$, where
\[
\partial_t F(0,\lambda_\star) = 2\lambda_\star (\alpha  (m+L) - 2 + \delta^2 \lambda_\star ).
\]
This is true if and only if $\lambda_\star \leq (2 - \alpha (m+L))\delta^{-2}$, which combined with \eqref{eq:lambdastar1} is equivalent to $\alpha \leq \alpha_-$. Conversely, if $\alpha \leq \alpha_-$, then 
\[
1- \alpha m (1-\delta)> \alpha L(1+\delta)-1
\]
and hence $\rho_\GD(\delta) = 1- \alpha m(1-\delta)$. If $\lambda_\star$ is defined by \eqref{eq:lambdastar1}, then $F(\pm \rho_\GD(\delta), \lambda_\star) \geq 0$, which shows that $\rho_\star = \rho_\GD(\delta)$. Finally, observe that $\alpha_- > 0$ if and only if 
\[
\delta < 2/(\kappa+1),
\] 
which is also equivalent to $2 -\delta^2 \lambda_\star > 0$ (cf. the second part of Lemma \ref{lemm:inexactSproperties}).

\item A similar argument applies if $\rho_\GD(\delta) = \alpha L (1+\delta)-1$. If $\rho_\star = \rho_\GD(\delta)$, then 
\[
\lambda_\star = \frac{\kappa-1}{\kappa\delta(1+\delta)}.
\] 
in order to ensure $F(-\rho_\star,\lambda_\star) = 0$.  The condition $F(\rho_\star,\lambda_\star) \geq 0$, or equivalently $\partial_t F(0,\lambda_\star) \geq 0$, is the same as $\alpha \geq \alpha_+$. Conversely, if $\alpha \geq \alpha_+$, then $\rho_\GD(\delta) = \alpha L (1+\delta)-1$ and $F(\pm \rho_\GD(\delta), \lambda_\star) \geq 0$.
\end{inparaenum}
\end{proof}

In the context of Lemma \ref{lemm:achievedS}, we have $\rho_\GD(\delta) = \alpha L(1+\delta)-1$ when  $\alpha \geq \alpha_+$. In that case there is no restriction on $\delta$, but observe that $\delta < 2/(\kappa+1)$ is necessary for $\rho_\GD(\delta) < 1$ to hold; if $\alpha = \alpha_+$, then it is also sufficient.
\begin{proof} [Proof of Proposition \ref{prop:noisyS}]
The Proposition follows from Lemma \ref{lemm:achievedS} and the definition of $\rho_\star$. To show that one can take $c=1$ in the definition of hyperstability, note that $(A,B_{ue})$ is  controllable for $\alpha > 0$. Using Lemma \ref{lemm:KYP} and Lemma \ref{lemm:observable}, there exists $P = pI$ satisfying \eqref{eq:LMI} with $p>0$.
\end{proof} 

When the conditions of Lemma \ref{lemm:achievedS} are violated, we must have $\rho_\star > \rho_\GD(\delta)$. By the third part of Lemma \ref{lemm:inexactSproperties}, it is easy to compute $\rho_\star$. This leads to Proposition \ref{prop:weakGD}:

\begin{proof} [Proof of Proposition \ref{prop:weakGD}]
If $\rho_\star > \rho_\GD(\delta)$, then we must have $F(\rho_\star,\lambda_\star) = F(-\rho_\star,\lambda_\star) = 0$. Since $F$ is quadratic in $t$, this happens if and only if $\partial_t F(0,\lambda_\star) = 0$, or equivalently,
\[
\lambda_\star = \frac{2- \alpha(L+m)}{\delta ^2}.
\]
The second part of Lemma \ref{lemm:inexactSproperties} gives  an upper bound $\alpha < 2/(L+m)$. We can then compute
\[
\rho_\star = \left( 1-\frac{2 \alpha  L m}{L+m} + \frac{\alpha  \delta ^2 (L+ m -2 \alpha  L m )}{2- \alpha  (L+m)} \right)^{1/2}
\]
as claimed. Conversely,  the proof of Lemma \ref{lemm:achievedS} makes it clear that this value of $\rho_\star$ is strictly larger than $\rho_\GD(\delta)$ unless $\alpha = \alpha_-$ or $\alpha = \alpha_+$. The same argument as in the proof of Proposition \ref{prop:weakGD} shows that one can take $c = 1$ in the definition of hyperstability.
\end{proof}

\subsection{Strongly convex functions} \label{subsect:inexactFmL}

Next, we address Proposition \ref{prop:GDM} for strongly convex functions in $\mathcal{F}(m,L)$. As usual, we assume that $f \in \mathcal{F}_0(m,L)$ by translations. We again apply the results of \S \ref{subsect:noise}, using the off-by-one IQC as the reference IQC. The underlying LTI system is
\begin{align*}
	\begin{bmatrix}
		x_{t+1} \\
		v_{t+1}
	\end{bmatrix}	= \hat A \begin{bmatrix}
		x_t \\ v_t
	\end{bmatrix} + B_{ue} \begin{bmatrix}
	u_t \\ e_t
\end{bmatrix}, \quad \hat A = \begin{bmatrix}
1 & 0 \\
L & 0
\end{bmatrix} \otimes I_n, \quad B_{ue} = \begin{bmatrix}
-\alpha & -\alpha \\
-1 & 0
\end{bmatrix} \otimes I_n.
\end{align*}
First, we verify that the inexact Jury--Lee criterion \eqref{eq:perturbedFDIconvex} holds for some $\lambda > 0$ when
\[
\tau = \rho = \rho_\GD(\delta).
\]
Denote by $\Phi(z, \lambda)$ the left-hand side of \eqref{eq:perturbedFDIconvex} and consider the two possible values of $\rho_\GD(\delta)$.

\begin{inparaenum}
	\item First, suppose that $\rho_\star := \rho_\GD(\delta) = 1 - \alpha m (1-\delta)$, and calculate
	\[
	\Phi(\rho_\star,\lambda)  = -m^2 \alpha^2 (1-\delta)^2(\delta \lambda - (L-m)\alpha)^2.
	\]
	Requiring this quantity to be nonnegative uniquely determines
	\begin{equation} \label{eq:lambdastarconvexGD}
	\lambda_\star = \frac{(L-m)\alpha}{\delta}.
	\end{equation}
	Plugging in this value of $\lambda$, we can write $	\Phi(z, \lambda_\star) = F(\Re z)$, where
	\[
F(t) = 2\lambda_\star(z - \rho_\star)(2(1 - \alpha L)z - 1 -\rho_\star^2 + \alpha L ((1+\delta)-(1-\delta)\rho_\star)).
	\]
	If $\alpha \geq 1/L$, then this is a concave function of $t$, so $F(t) \geq 0$ for all $t \in [-\rho_\star, \rho_\star]$ if and only if $F(-\rho_\star) \geq 0$. But
	\[
	F(-\rho_\star) = 4\lambda_\star \rho_\star(1+\rho_\star)(\rho_\star - (\alpha L(1+\delta)-1)) \geq 0,
	\]
	since $\rho_\star \geq \alpha L(1+\delta)-1$ by assumption. 
	
	On the other hand, if $\alpha < 1/L$, then $F(t)$ is a convex quadratic, so $F(t) \geq 0$ for all $t \in [-\rho_\star, \rho_\star]$ if and only if $F'(\rho_\star) \leq 0$. Now
	\[
	F'(\rho_\star) = -2\alpha \lambda_\star(1-\delta)(2L - \alpha m((3-\delta)L  - (1-\delta)m)),
	\]
	and this is certainly negative for $\alpha < 1/L$.
	
	\item Next, suppose that $\rho_\star := \rho_\GD(\delta) = \alpha L(1+\delta) -1$. In that case, compute
	\[
	\Phi(-\rho_\star, \lambda) =  -L^2 \alpha^2(1+\delta)^2(\lambda \delta - (L-m)\alpha)^2.
	\]
	Nonnegativity of this quantity uniquely determines $\lambda$ by the same formula \eqref{eq:lambdastarconvexGD}. We can again express $\Phi(z,\lambda_\star) = F(\Re z)$, where this time
	\[
	F(t) = 2\lambda_\star (z+\rho_\star)(2(1-\alpha L)z - 2(1-\alpha m) + \alpha (L-m) (2\delta + \alpha L(1-\delta^2))).
	\]
	This is a concave function, so as above we only need to verify that $F(\rho_\star) \geq 0$.  Some algebra gives
	\[
	F(\rho_\star) = 4\lambda_\star \rho_\star(1-\rho_\star)(\rho_\star - (1-\alpha m(1-\delta))).
	\]
	Since $\rho_\star \geq 1-\alpha m(1-\delta)$ by assumption, this quantity is nonnegative provided $\rho_\star \leq 1$.
\end{inparaenum}

In the context of establishing $\rho_\GD(\delta)$-hyperstability via Theorem \ref{theo:popov}, we have shown that \eqref{eq:FDI} holds under the hypotheses of Proposition \ref{prop:GDM}. To verify the $\rho_\GD(\delta)$-minimal stability hypothesis, we combine Lemma \ref{lemm:perturbedminimallystable} with the following result.

\begin{lemm} \label{lemm:underlying}
	Let $A = C = I_n$ and $B = -\alpha I_n$. If $\rho \geq \rho_\GD$ and $\tau \in [0,\rho]$, then there exists $\varepsilon \in [0,1]$ such that the matrix \eqref{eq:minimalaugmented} is $\rho$-Schur.
\end{lemm}
\begin{proof}
	Because of its block structure, we can assume \eqref{eq:minimalaugmented} is the $2\times 2$ matrix
	\[
	\begin{bmatrix}
	1- \alpha((1-\varepsilon) m+ \varepsilon L) & \alpha \varepsilon \tau^2 \\
	(1-\varepsilon)(L-m) & \varepsilon \tau^2 
	\end{bmatrix}.
	\]
	Up to a scalar factor of $\rho^2$, the characteristic polynomial of this matrix as a function of $\rho z$ is
	\[
	z^2 - \rho^{-1}(1-\alpha m - \alpha\varepsilon(L-m) + \varepsilon \tau^2)z +   \varepsilon \tau^2 \rho^{-2}(1-\alpha L).
	\]
	It suffices to show that the roots of this function lie in the open unit disk, or equivalently,
	\begin{gather}
	\rho^{-1}|1-\alpha m +\varepsilon \tau^2 - \alpha \varepsilon (L-m) | < 1 + \varepsilon \tau^2 \rho^{-2}(1-\alpha L), \label{eq:minimal1} \\
	\varepsilon \tau^2 \rho^{-2} |1-\alpha L| <  1. \label{eq:minimal2}
	\end{gather}
	Clearly \eqref{eq:minimal2} is satisfied for $\varepsilon \geq 0$ sufficiently small. If $\rho > |1-\alpha m|$, then \eqref{eq:minimal1} is satisfied with $\varepsilon = 0$. Thus we can assume $\rho = |1-\alpha m|$, which implies $\rho = 1-\alpha m$ since $\rho \geq \rho_\GD$. Now we must verify
	\[
	-1 - \varepsilon \tau^2 \rho^{-2}(1-\alpha L) < \rho^{-1}(1-\alpha m+ \varepsilon \tau^2 -\alpha \varepsilon (L-m) ) < 1+ \varepsilon \tau^2 \rho^{-2}(1-\alpha L).
	\]
	Clearly the first inequality holds for sufficiently small $\varepsilon \geq 0$. As for the second inequality, it suffices to show $\rho(\tau^2 -\alpha(L-m)) < \tau^2 (1-\alpha L)$. But
	\[
	\tau^2(1-\alpha L) - \rho( \tau^2-\alpha (L-m) ) = \alpha (L-m) (\rho - \tau^2),
	\]
	which is strictly positive since $\rho = 1-\alpha m  \in (0,1)$ and $\tau \leq \rho$. Thus it suffices to choose $\varepsilon > 0$ sufficiently small.
\end{proof}

\begin{proof} [Proof of Proposition \ref{prop:GDM}]
	Let $\tau = \rho_\GD(\delta)$ and define $\lambda_\star$ by \eqref{eq:lambdastarconvexGD}.
	In view of the observations above, to show that $(\hat A,B_{ue}; \sigma_\tau + \lambda_\star \sigma_\Delta)$ is $\rho_\GD(\delta)$-hyperstable using Theorem \ref{theo:popov}, we need only verify that $\det \Pi_{ue}(\zeta,z) \not \equiv 0$. The bottom right entry of $\sigma_\tau + \lambda_\star \sigma_\Delta$ in the block decomposition \eqref{eq:QSR} is
	\[
R =	\begin{bmatrix}
		\lambda_\star \delta^2 - 2 & 0 \\
		0& -\lambda_\star 
	\end{bmatrix} \otimes I_n.
	\]
	Since we are assuming that $\alpha \leq 2/((1+\delta)L)$ and $\lambda_\star$ is given by  $\eqref{eq:lambdastarconvexGD}$, this matrix certainly has nonzero determinant, and hence $\det \Pi_{ue}(\infty, \infty) \neq 0$.
\end{proof} 

\subsection{Proof of Lemma \ref{lemm:optimal}}
The proof is essentially contained in \cite{taylor2017smooth}, so we only provide a brief sketch. As in the statement of Lemma \ref{lemm:optimal}, suppose that $0 \leq \delta < 2\sqrt{\kappa}/(\kappa + 1)$ and $\alpha \in (0, \bar \alpha_-)$. Denote by $\rho_\star$ the right-hand side of \eqref{eq:rhoupperbound}. The combination of Proposition \ref{prop:noisyS} and Proposition \ref{prop:weakGD} shows that
\[
\rho_\star^2 = \inf \{\rho^2:\,  \rho^2, \lambda_1, \lambda_2 \geq 0, \ L_1 - \rho^2 L_0 + \lambda_1 M_1 + \lambda_2 M_2 \preceq 0 \},
\]
where we define
\begin{gather*}
L_0 = \begin{bmatrix} 1 & 0 & 0 \\
	0 & 0 & 0 \\
	0 & 0 & 0 \end{bmatrix}, \quad L_1 = \begin{bmatrix} 1 &  -\alpha & -\alpha  \\
-\alpha & \alpha^2 & \alpha^2 \\
-\alpha & \alpha^2 & \alpha^2 \end{bmatrix} , \\ M_1 =   \begin{bmatrix} -2 L m & L+m & 0 \\
L+m & -2 & 0 \\
0 & 0 & 0  \end{bmatrix} , \quad M_2 = \begin{bmatrix} 0 & 0 & 0 \\
0 & \delta^2 & 0 \\
0 & 0 & -1 \end{bmatrix}.
\end{gather*}
This minimization problem is the dual of the primal semidefinite program
\[
\sup \{\mathrm{tr}(L_1 G): \, G \succeq 0, \ \mathrm{tr}(L_0 G) \leq 1, \ \mathrm{tr}(M_1 G) \geq 0, \ \mathrm{tr}(M_2 G) \geq 0 \}.
\]
It is easy to construct a strictly feasible point for the primal problem: first take any nonzero $x_\feas \in \RR^3$ with $\|x_\feas \| < 1$ and let $u'_\feas = (L+m)x_\feas/2$. Then choose $u_\feas = u_\feas' + u_\feas''$, where $u_\feas'' \in \RR^3$ is nonzero and orthogonal to $x_\feas$ (and hence also $u_\feas'$). Finally, choose a nonzero $e_\feas \in \RR^3$ orthogonal to $(x_\feas, u_\feas)$. The matrix
\[
G_\feas := \begin{bmatrix}
	x_\feas & u_\feas & e_\feas
\end{bmatrix}^\top \begin{bmatrix} x_\feas & u_\feas & e_\feas \end{bmatrix}
\]
is positive definite, and 
satisfies $\mathrm{tr}(L_0 G_\feas) = \|x_\feas\|^2 < 1$. Furthermore,
\[
\mathrm{tr}(M_1 G_\feas) = (L-m)^2\|x_\feas\|^2/2 -2 \| u_\feas'' \|^2, \quad \mathrm{tr}(M_2 G_\feas) = \delta^2 \| u_\feas \|^2 - \|e_\feas\|^2.
\]
If we choose $\| u_\feas'' \|$ and $\|e_\feas\|$ sufficiently small, then both terms above are strictly positive, so $G_\feas$ is strictly feasible. Since Slater's condition is satisfied there is no duality gap and the supremum in the primal problem is also equal to $\rho^2_\star$. 

Now observe that if the ambient dimension is $n \geq 3$, then any $\RR^{3 \times 3} \ni G \succeq 0$ can certainly be written in the form
\[
G = \begin{bmatrix}
	x & u & e
\end{bmatrix}^\top \begin{bmatrix} x & u & e \end{bmatrix}
\]
for some $x, u, e \in \RR^{n}$. In particular, if $\rho < \rho_\star$, then there exists $x_0,u_0,e_0 \in \RR^n$ satisfying
\[
\| x_0 - \alpha(u_0 + e_0) \|^2 > \rho^2 \| x_0 \|^2, \quad  \sigma_0(x_0,u_0) \geq 0, \quad \delta^2 \|u_0\|^2 - \|e_0\|^2 \geq 0.
\]
Now given $(x_0, u_0)$ with $\sigma_0(x_0,u_0) \geq 0$, we can always find $f \in \mathcal{F}_0(m,L) \subset \mathcal{S}_0(m,L)$ such that $u_0 = \nabla f(x_0)$. Indeed, according to \cite[Theorem 4]{taylor2017smooth}, there exists $f \in \mathcal{F}_0(m, L)$ such that
\begin{gather*}
f(x_0) = \tfrac{1}{2} \langle u_0, x_0 \rangle \geq 0, \quad f(0) = 0, \\
 \nabla f(x_0) = u_0, \quad \nabla f(0) = 0. 
\end{gather*}
In conclusion, if $\rho < \rho_\star$, then we can find $f \in \mathcal{F}_0(m,L)$ and $x_0, e_0 \in \RR^n$ satisfying
\[
\| x_0 - \alpha(\nabla f(x_0) + e_0) \| > \rho \| x_0 \|, \quad \|e_0\| \leq \delta \| \nabla f(x_0) \|
\]
as desired. \qed

\section{Inexact Triple Momentum Method} \label{sect:TMM}
\subsection{TMM} \label{subsect:exactTMM}

In this section we discuss basic properties of TMM in the exact setting. Recall from \S \ref{subsect:TMMresults} that TMM is defined on $\RR^n$ by the recursion \eqref{eq:degree2}, where $(\alpha,\beta,\gamma)$ are defined by \eqref{eq:TMparameters}. The corresponding convergence rate that holds over $\mathcal{F}(m,L)$ is 
\[
\rho_\TM = 1-1/\sqrt{\kappa}.
\]
By translations we can restrict our attention to $f \in \mathcal{F}_0(m,L)$.

Similar to \S \ref{subsect:nesterov}, TMM admits a state-representation on $\RR^{2n}$ with system matrices
\begin{equation} \label{eq:TMMstatespace}
A = \begin{bmatrix}
	1 + \beta & -\beta \\
	1 & 0
\end{bmatrix} \otimes I_n , \quad B = \begin{bmatrix}
	-\alpha \\ 0  
\end{bmatrix} \otimes I_n, \quad C = \begin{bmatrix}
	1+\gamma & -\gamma
\end{bmatrix} \otimes I_n.
\end{equation}
In \cite{van2017fastest}, the parameters \eqref{eq:TMparameters} were chosen in the frequency domain to satisfy the Jury--Lee criterion
\eqref{eq:jurylee} via pole-zero assignment as follows. Let $H(z) = C(zI-A)^{-1}B$. Since $z \mapsto \det (I - mH(z)) \not \equiv 0$, conjugating by $(I-mH(z))^{-1}$ shows that \eqref{eq:jurylee} is equivalent to
\[
\Re \left [ (1- \tau^2/z) (I - LH(z))(I-mH(z))^{-1} \right]  \succeq 0.
\]
In fact, because of the block structure, for the purposes of verifying \eqref{eq:jurylee} for TMM we can assume that $H(z)$ is scalar, given by
\begin{equation} \label{eq:TMMtransfer}
H(z) = -\alpha \frac{(1+\gamma)z-\gamma}{(z-1)(z-\beta)}.
\end{equation}
We can reduce $(1- \tau^2/z) (1 - LH(z))(1-mH(z))^{-1}$ to a linear fractional transformation of the disk of radius $\rho$ (for a certain $\rho > 0$) onto the left half-plane by an appropriate choice of pole-zero cancellations: there is precisely one way to choose the four unknowns $(\alpha,\beta,\gamma,\rho)$ so that $1-LH(z)$ has roots  $z\in \{-\rho, 0\}$, and $1-mH(z)$ has roots $z\in \{ \rho, \tau^2\}$.
These parameters are
\[
\begin{gathered}
	\alpha = \frac{2(1-\tau^2)}{L(1-\tau^2) + m}, \quad \beta = \frac{(\kappa(1-\tau^2)-1)\kappa \tau^2}{(\kappa(1-\tau^2)+1)(\kappa -1)},  \quad \gamma = \frac{(\kappa(1-\tau^2) + 1)\tau^2}{2(\kappa-1)(1-\tau^2)},  \\ \rho = \frac{ \kappa(1-\tau^2) -1}{\kappa(1-\tau^2) + 1}.
\end{gathered}
\]
Note that we must have $\kappa > 1/(1-\tau^2)$ in order to have $\rho > 0$. Furthermore, $\rho$ is related to $\tau$ by $0 \leq \tau \leq \rho$. With these choices,
\[
(1- \tau^2 /z) (1- LH(z) ) (1-mH(z))^{-1} = \frac{z+\rho}{z-\rho}.
\]
 Since $\rho$ is a decreasing function of $\tau$, the best rate is obtained by setting $\tau = \rho$. This provides an implicit equation for $\rho$ whose solution is $\rho_\TM = 1-1/\sqrt{\kappa}$, and also yields the parameter choices \eqref{eq:TMparameters}.
 
 In \cite{van2017fastest} the authors resort to constructing an explicit Lyapunov function to show that TMM converges at rate $\rho_\TM$. This is because the properties of TMM in the frequency domain are (by design) quite degenerate. In contrast, a refined result like Theorem \ref{theo:popov} enables an analysis entirely within the frequency domain. We apply the Jury--Lee criterion. Since the frequency condition \eqref{eq:jurylee} is satisfied by construction, it remains to show that \eqref{eq:minimalaugmented} is $\rho_\TM$-Schur for some $\varepsilon \in [0,1]$. Because of the block structure, we can assume that the matrix \eqref{eq:minimalaugmented} is given by
\[
\begin{bmatrix}
	\rho_\TM(1+\rho_\TM - (2+\rho_\TM)\varepsilon) & -(1-\varepsilon)\rho_\TM^3 &  \varepsilon m^{-1}\rho_\TM^2 (1-\rho_\TM)^2 (1+ \rho_\TM) \\
	1 & 0 & 0 \\
	\dfrac{ m(1-\varepsilon) \rho_\TM (2+\rho_\TM)}{(1-\rho_\TM)^2 (1+\rho_\TM)} & \dfrac{ -m(1-\varepsilon) \rho_\TM^3 }{(1-\rho_\TM)^2 (1+\rho_\TM)} & \varepsilon\rho_\TM^2
\end{bmatrix}.
\]
The eigenvalues of this matrix are $0, \, \rho_\TM^2$, and $(1-2\varepsilon)\rho_\TM$.
Clearly if $\varepsilon \in (0,1)$, then this matrix is $\rho_\TM$-Schur. Lemma \ref{lemm:jurylee} now applies.

\subsection{Inexact TMM} In order to analyze inexact TMM, we follow \S \ref{subsect:noise} and consider the system
\begin{align*}
	\begin{bmatrix}
		x_{t+1} \\
		v_{t+1}
	\end{bmatrix}	= \hat A \begin{bmatrix}
		x_t \\ v_t
	\end{bmatrix} + B_{ue} \begin{bmatrix}
		u_t \\ e_t
	\end{bmatrix}, \quad \hat A = \begin{bmatrix}
		A & 0_n \\
		LC & 0_n
	\end{bmatrix}, \quad B_{ue} = \begin{bmatrix}
		B & B  \\
		-I_n & 0_n
	\end{bmatrix},
\end{align*}
where $x_t = (\xi_t, \xi_{t-1})$ and $(A,B,C)$ are specified by \eqref{eq:TMMstatespace}. We begin by examining the perturbed Jury--Lee frequency condition \eqref{eq:perturbedFDIconvex}. Fixing $\tau = \rho$ and using the notation of  \S \ref{subsect:noise}, we can write 
\[
2 \Re \left[ (1-\rho^2/z)(1- m H(z))^* (1-LH(z)) \right] = q(\Re z,\rho)
\]
whenever $|z| = \rho$, where
\begin{align*}
q(t,\rho) = -2 (\rho^2 -\rho_\TM^2)(2 \rho_\TM t^2 + (\rho^2 + \rho_\TM^2)t - (1+\rho_\TM)^2 \rho^2).
\end{align*}
We now introduce some additional notation: given $\mu \in \RR$ and a fixed $\rho > 0$, define the linear (in $t$) function
\[
\ell(t, \rho, \mu) \coloneqq  \rho^2 + \mu^2 - 2 \mu t.
\]
If $|z| = \rho$, then we can express
\[
|z - \mu|^2 = \ell(\Re z,\rho, \mu).
\]
Denote by $\Phi(z,\rho,\lambda)$ the left-hand side of \eqref{eq:perturbedFDIconvex}, where we take $\tau = \rho$. Then for $|z| = \rho$, we can write $\Phi(z,\rho,\lambda) = F(\Re z, \rho, \lambda)$, where
\begin{multline*} 
F(t,\rho,\lambda) = \lambda q(t, \rho)  - \lambda^2 \delta^2 \cdot \ell(\Re z, 1) \cdot \ell(\Re z,\beta)
  \\ - \alpha^2 (1+\gamma)^2\left ( (L-m)^2 \ell(\Re z,1) + 2\lambda \delta^2 Lm(1-\Re z) \right) \ell(\Re z, \gamma/(1+\gamma)).
\end{multline*} 
It is easy to see that $F(t,\rho,\lambda)$ is a concave quadratic function of $t$.
Thus, for a given $\rho >0$ and $\lambda \geq 0$, we have $F(t, \rho,\lambda) \geq 0$ for $t \in [-\rho, \rho]$ if and only if $F(\pm \rho, \rho, \lambda) \geq 0$.

\pgfplotsset{scaled x ticks=false,max space between ticks=50}
\begin{figure}
	\begin{center}
		\begin{tikzpicture}[
			declare function ={ 
				theta(\r,\d) = (2-\r)*\r+(2+\r)*\d;
				rho(\r,\d) =(theta(\r,\d)+sqrt(theta(\r,\d)^2+4*\d*\r^2*(2-\r)))/(2*(2-\r));
			},
			]
			\begin{axis}[
				height=6cm, width=7.5cm, title = {$\kappa = 2$}, ylabel=${\rho}$, legend pos = south east,
				ticklabel style = {font=\tiny},
				name=a,
				axis line style={-},
				xmin = 0,
				xmax = 0.507469,
				ymax = 1,
				xlabel style={yshift=0cm},
				ylabel style={rotate=-90}, 
				]
				\addplot[color=black, opacity=.2, line width=5pt] table[col sep=comma,header=false,x index=0,y index=1] {tmm_inexact_2.csv};
				\addplot[dashed, color=black, domain = 0 : 0.507469, samples=50] {rho(0.292893,x)};
				\legend{LMI, $\rho_\TM(\delta)$}
			\end{axis}
			
			\begin{axis}[
				height=6cm, width=7.5cm, title= {$\kappa = 10$}, legend pos = south east,
				ticklabel style = {font=\tiny},
				name=b,
				anchor=north west, at = (a.north east),
				xshift = 1.5cm,
				axis line style={-},
				xmin = 0,
				xmax = 0.132081,
				ymax = 1,
				xlabel style={yshift=0cm},
				ylabel style={rotate=-90},
				]
				\addplot[color=black, opacity=.2, line width=5pt] table[col sep=comma,header=false,x index=0,y index=1] {tmm_inexact_10.csv};
				\addplot[dashed, color=black, domain = 0 : 0.132081, samples=50] {rho(0.683772,x)};
				\legend{LMI,$\rho_\TM(\delta)$}
			\end{axis}
			
			\begin{axis}[
				height=6cm, width=7.5cm, title = {$\kappa = 10^2$}, xlabel=$\delta$, ylabel=${\rho}$, legend pos = south east,
				ticklabel style = {font=\tiny},
				name=c,
				anchor=north west, at = (a.south west),
				yshift = -1.5cm,
				axis line style={-},
				xmin = 0,
				xmax = 0.0296496,
				ymax = 1,
				xlabel style={yshift=0cm},
				ylabel style={rotate=-90},
				]
				\addplot[color=black, opacity=.2, line width=5pt] table[col sep=comma,header=false,x index=0,y index=1] {tmm_inexact_100.csv};
				\addplot[dashed,color=black, domain = 0 : 0.0296496, samples=50] {rho(0.9,x)};
				\legend{LMI, $\rho_\TM(\delta)$}
			\end{axis}
			
			\begin{axis}[
				height=6cm, width=7.5cm, title = {$\kappa = 10^3$}, xlabel=$\delta$, legend pos = south east,
				ticklabel style = {font=\tiny},
				name = d,
				anchor=north west, at = (b.south west),
				yshift = -1.75cm,
				axis line style={-},
				xmin = 0,
				xmax =0.00835168,
				ymax = 1,
				xlabel style={yshift=0cm},
				ylabel style={rotate=-90},
				]
				\addplot[color=black, opacity=.2, line width=5pt] table[col sep=comma,header=false,x index=0,y index=1] {tmm_inexact_1000.csv};
				\addplot[dashed, color=black, domain = 0 : 0.00835168, samples=50] {rho(0.968377,x)};
				\legend{LMI, $\rho_\TM(\delta)$}
			\end{axis}
		\end{tikzpicture}
	\end{center}
	\caption{Plots of $\rho_\TM(\delta)$ and the numerically computed rate obtained by solving \eqref{eq:LMI} for $P \succeq 0$ using the off-by-one IQC (over all admissible $\tau$). This supports the conjecture that $\rho_\TM(\delta)$ is actually optimal with respect to the inexact Jury--Lee criterion. \label{fig:tmm_inexact}}
\end{figure}
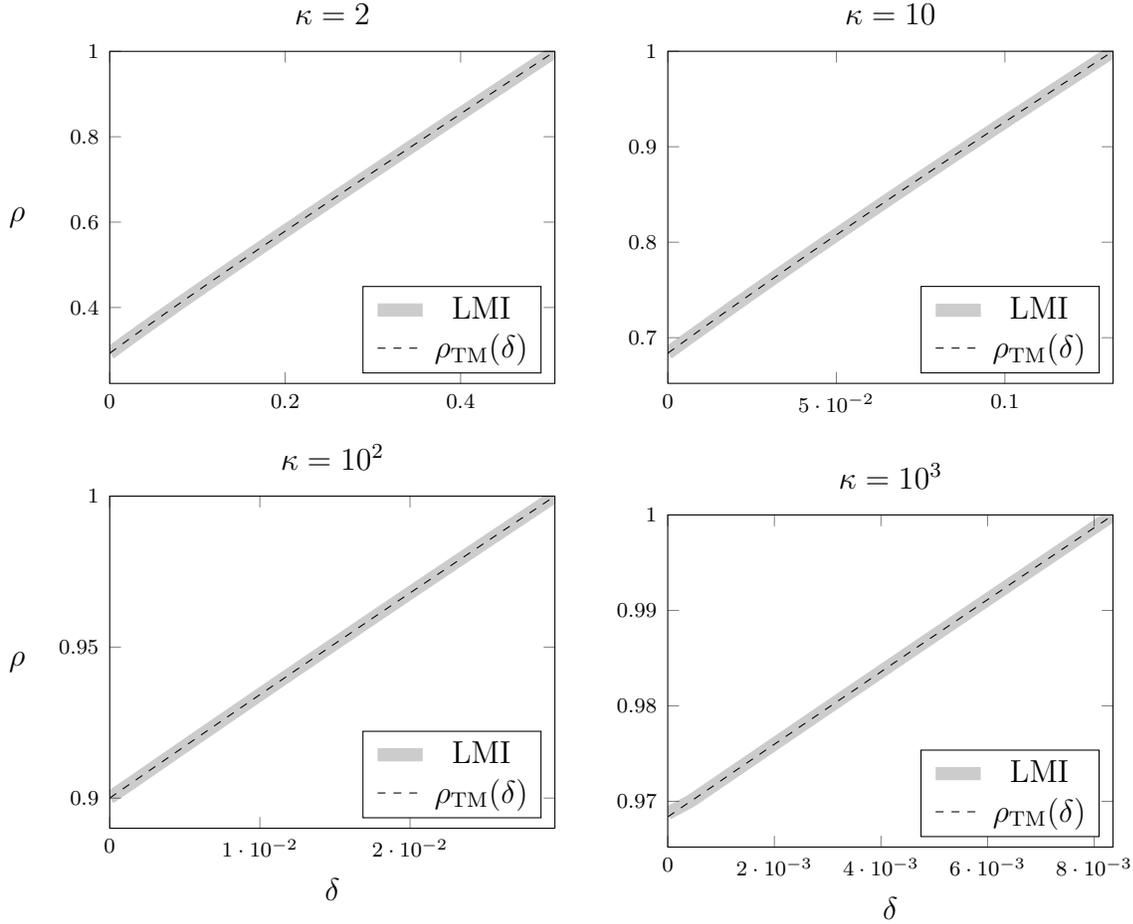

Next, define
\[
\rho_\star \coloneqq \inf \{ \rho \geq \rho_\TM: F(\pm \rho, \rho, \lambda) \geq 0 \text{ for some } \lambda \in (0, 2\delta^{-2}] \}.  
\]
By arguing as in \S \ref{subsect:GDSml}, if the optimal value $\rho_\star$ is finite, then $F(t,\rho_\star, \lambda_\star) \geq 0$ for all $t \in [-\rho_\star, \rho_\star]$ and some $\lambda_\star \in (0,2\delta^{-2}]$. Since $\rho_\TM \in (0,1)$, it is easy to check that
\[
F(0,\rho,2\delta^{-2}) <0
\]
 for any $\rho > 0$. Consequently, $\lambda_\star \in (0,2\delta^{-2})$. 
 
 As in \S \ref{subsect:GDSml}, we must have that at least one of $F(\pm \rho_\star, \rho_\star, \lambda_\star) = 0$. Furthermore, suppose that $F(-\rho_\star, \rho_\star, \lambda_\star) = 0$ but $F(\rho_\star, \rho_\star, \lambda_\star) > 0$. Since $\lambda_\star \in (0, 2/\delta^2)$, we must have $\partial_\lambda F(-\rho_\star, \rho_\star, \lambda_\star) = 0$, otherwise by perturbing $\lambda$ slightly we could contradict the definition of $\rho_\star$. The same argument applies if we replace $t = -\rho_\star$ with $t = \rho_\star$. Thus there are three (not mutually exclusive) possibilities:
 \begin{enumerate} \itemsep6pt
 	\item $F(-\rho_\star, \rho_\star, \lambda_\star) = 0 = F(\rho_\star, \rho_\star, \lambda_\star)$, 
 		\item $F(\rho_\star, \rho_\star, \lambda_\star) = 0 = \partial_\lambda F(\rho_\star, \rho_\star, \lambda_\star)$,
 	\item $F(-\rho_\star, \rho_\star, \lambda_\star) = 0 = \partial_\lambda F(-\rho_\star, \rho_\star, \lambda_\star)$.
  \end{enumerate} 
We have been unable to analytically characterize which of these conditions is optimal depending on the values of the parameters. However, some experimentation suggests that the third condition always leads to the smallest value of feasible $\rho$. Thus, as a relaxation, we find the smallest $\rho$ satisfying 
\[
F(-\rho, \rho, \lambda) = 0 = \partial_\lambda F(-\rho, \rho, \lambda)
\]
for some $\lambda \in (0,2\delta^{-2})$, and then show that \eqref{eq:FDI} indeed holds.

Due to the quadratic nature of $F(t,\rho, \lambda)$ in $\lambda$, it suffices to solve for $\rho$ satisfying 
\[
\disc(F(\rho,\rho,\lambda); \lambda) = 0.
\]
Define the quadratic functions
\begin{align*}
a_{\pm}(\rho) & \coloneqq  -(2-\rho_\TM )\rho^2 + ( \rho_\TM (2-\rho_\TM) \pm \delta(2+\rho_\TM))\rho \pm \delta \rho_\TM^2, \\
b_{\pm}(\rho) & \coloneqq -(2-\rho_\TM)\rho^2 - (\rho_\TM (2+\rho_\TM -\rho_\TM^2) \mp \delta (2-3\rho_\TM + \rho_\TM^3)) \rho  \\
 &- \rho_\TM^2 ((2-\rho_\TM)\rho_\TM \mp \delta(1-\rho_\TM)^2) .
\end{align*}
Then the discriminant admits the factorization
\[
\mathrm{disc}(F(-\rho,\rho, \lambda); \lambda) = 4\rho^{-2} (2-\rho_\TM)^{-4} (\rho + \rho_\TM^2)^2 a_+(\rho) a_{-}(\rho) b_{+}(\rho)b_{-}(\rho).
\]
Only $a_{+}$ and $b_{+}$ can have positive roots for $\delta  > 0$. Furthermore, while $b_{+}$ can have positive roots, the corresponding unique value of $\lambda$ for which $F(-\rho,\rho,\lambda) = 0$ is actually negative for $\rho > 0$. Thus the only feasible value of $\rho$ is the unique positive root $a_{+}$  (such a root always exists, since $a_{+}$ is a concave quadratic and $a_{+}(0) > 0$). In the notation of Proposition \ref{prop:TMM}, this positive root is
\[
\rho_\TM(\delta)  \coloneqq \frac{\theta_\TM + (\theta_\TM^2 + 4\delta \rho_\TM^2(2-\rho_\TM))^{1/2}}{2(2-\rho_\TM)},
\]
where $\theta_\TM = (2-\rho_\TM)\rho_\TM + (2+\rho_\TM)\delta$.

Next, we verify that $\rho_\TM(\delta)$ is actually feasible for \eqref{eq:FDI}. To ease the notational burden, let us write $\rho_0 = \rho_\TM(\delta)$. Let $\lambda _0$ be the unique value satisfying
\[
F(-\rho_0, \rho_0, \lambda_0) = 0.
\]
 It suffices to show that $\partial_t F(0, \rho_0, \lambda_0) \geq 0$. 
Perhaps the simplest way to show this is to first express $\delta$ and $\lambda_0$ as functions of $\rho_0$:
\begin{equation} \label{eq:TMMlambdadelta}
\begin{gathered}
\delta = \frac{\rho_0 \rho_\TM (\rho_0 - \rho_\TM)(2- \rho_\TM)^2}{ \rho_0 (2+\rho_\TM) + \rho_\TM^2}, \\ \lambda_0 = \frac{\rho_\TM(\rho_\TM^2 + \rho_0(2+\rho_\TM))^2}{\rho_0(\rho_0^2(2-\rho_\TM)-2\rho_0 \rho_\TM (1-\rho_\TM) + \rho_\TM^3)}.
\end{gathered}
\end{equation} 
Now plug \eqref{eq:TMMlambdadelta} back into the derivative. It is then easy show that all the Taylor coefficients of  the polynomial $\rho \mapsto \partial_t F(0,\rho,\lambda_0)$ at $\rho = \rho_\TM$ are nonnegative, which implies that the derivative is nonnegative for any value of $\rho_0 \geq \rho_\TM$. For the sake of brevity we omit the details.

\begin{proof} [Proof of Proposition \ref{prop:TMM}]
To finish the proof of Proposition \ref{prop:TMM}, we must show that $(\hat A, B_{ue}; \sigma_{\rho_0} + \lambda_0 \sigma_\Delta)$ is actually $\rho_0$-hyperstable. The minimal stability hypothesis of Theorem \ref{theo:popov} follows from Lemma \ref{lemm:perturbedminimallystable} and the results of \S \ref{subsect:exactTMM}. It is also easy to see that $\lambda_0 \in (0,2\delta^{-2})$ for any $\rho_\TM, \rho_0  \in (0,1)$, so we can apply Theorem \ref{theo:popov} as in the proof of Proposition \ref{prop:GDM} (which can be found at the end of \S \ref{subsect:inexactFmL}).
\end{proof}

	\bibliographystyle{alphanum}
	
	\bibliography{frequency_criteria}

\end{document}